\newtheorem{theorem}{Theorem}[section]
\newtheorem{proposition}[theorem]{Proposition}
\newtheorem{corollary}[theorem]{Corollary}
\newtheorem{lemma}[theorem]{Lemma}
\newtheorem{remark}[theorem]{Remark}
\newcommand{\1}{{\bm 1}}
\numberwithin{equation}{section}
\begin{document}

\begin{center}
\large\bf
Determining Finite Connected Graphs \\
Along the Quadratic Embedding Constants of Paths
\end{center}

\bigskip

\begin{center}
Edy Tri Baskoro \\
Combinatorial Mathematics Research Group \\
Faculty of Mathematics and Natural Sciences \\
Institut Teknologi Bandung \\
Jalan Ganesa 10 Bandung, Indonesia \\
ebaskoro@math.itb.ac.id
\\[6pt]
and 
\\[6pt]
Nobuaki Obata\\
Graduate School of Information Sciences \\
Tohoku University\\
Sendai 980-8579 Japan \\
obata@tohoku.ac.jp
\end{center}

\bigskip

\begin{quote}
\textbf{Abstract}\enspace
The QE constant of a finite connected graph $G$,
denoted by $\mathrm{QEC}(G)$, is by definition
the maximum of the quadratic function 
associated to the distance matrix 
on a certain sphere of codimension two.
We prove that the QE constants of paths $P_n$ form a strictly
increasing sequence converging to $-1/2$.
Then we formulate the problem of determining all the graphs $G$
satisfying $\mathrm{QEC}(P_n)\le\mathrm{QEC}(G)<
\mathrm{QEC}(P_{n+1})$.
The answer is given for $n=2$ and $n=3$ 
by exploiting forbidden subgraphs for $\mathrm{QEC}(G)<-1/2$
and the explicit QE constants of 
star products of the complete graphs.
\end{quote}

\begin{quote}
\textbf{Key words}\enspace
conditionally negative definite matrix,
claw-free graphs,
distance matrix,
quadratic embedding constant,
star product graph,
\end{quote}

\begin{quote}
\textbf{MSC}\enspace
primary:05C50  \,\,  secondary:05C12 05C76
\end{quote}

\section{Introduction}

Let $G=(V,E)$ be a finite connected graph with $|V|=n\ge2$
and $D=[d(i,j)]_{i,j\in V}$ the distance matrix of $G$.
The \textit{quadratic embedding constant} 
(\textit{QE constant} for short)
of $G$ is defined by
\begin{equation}\label{0eqn:def od QEC(G)}
\mathrm{QEC}(G)
=\max\{\langle f,Df \rangle\,;\, f\in C(V), \,
\langle f,f \rangle=1, \, \langle \1,f \rangle=0\},
\end{equation}
where $C(V)$ is the space of all $\mathbb{R}$-valued functions
on $V$, $\bm{1}\in C(V)$ the constant function
taking value 1, and $\langle\cdot,\cdot\rangle$ the 
canonical inner product.
The QE constant was first introduced for
the quantitative study of quadratic embedding of graphs
in Euclidean spaces \cite{Obata2017,Obata-Zakiyyah2018}.
In particular, a graph $G$ admits a quadratic embedding
(in this case we say that $G$ is \textit{of QE class})
if and only if $\mathrm{QEC}(G)\le0$.
Moreover, it is noteworthy that
$\mathrm{QEC}(G)\le0$ 
is equivalent to the positive definiteness
of the $Q$-matrix $Q=[q^{d(i,j)}]$ for all $0\le q\le 1$.
This property,
first proved by Haagerup \cite{Haagerup79}
for trees and later by Bo\.zejko \cite{Bozejko89}
for general star products,
has many applications in harmonic analysis and quantum probability,
see \cite{Bozejko88,Obata2007,Obata2011} and references cited therein.

It is also interesting to observe a close relation between 
the QE constants and the distance spectra.
In fact, for a finite connected graph we have
\[
\lambda_2(G)
\le \mathrm{QEC}(G)
<\lambda_1(G),
\]
where $\lambda_1(G)$ and $\lambda_2(G)$
are respectively the largest and 
the second largest eigenvalues of the distance matrix of $G$.
It is straightforward to see that
$\lambda_2(G)=\mathrm{QEC}(G)$ holds if the distance matrix
of $G$ has a constant row sum (in some literatures,
such a graph is called \textit{transmission regular}).
But the converse is not true as the paths $P_n$ with even $n$
provide counter-examples.
In this aspect characterization of graphs satisfying 
$\lambda_2(G)=\mathrm{QEC}(G)$ is an interesting problem,
as is suggested by the attempt of classifying graphs
in terms of the second largest eigenvalue $\lambda_2(G)$,
see \cite{Koolen-Shpectorov1994}.

In this paper, we initiate the project of characterizing  
finite connected grahs in terms of the QE constants.
Our idea is based on the fact that
the QE constants of paths form a strictly increasing sequence:
\begin{equation}\label{0eqn:basic scale}
\mathrm{QEC}(P_2)<
\mathrm{QEC}(P_3)<\cdots<
\mathrm{QEC}(P_n)<
\mathrm{QEC}(P_{n+1})<
\cdots\rightarrow -\frac12\,.
\end{equation}
Then a natural question arises to determine finite connected graphs
along the above QE constants.
More precisely, 
we are interested in the family of graphs $G$ satisfying
\begin{equation}\label{01eqn:determination of family}
\mathrm{QEC}(P_n)\le \mathrm{QEC}(G)<\mathrm{QEC}(P_{n+1}),
\qquad n\ge2.
\end{equation}
The main goal of this paper is to give the answer 
to the first two cases of $n=2,3$.

This paper is organized as follows: In Section 2 we give
a quick review on the QE constant, for more details see
\cite{MO-2018,Obata-Zakiyyah2018}.

In Section \ref{sec:QE Constants of Paths}
we derive a general criterion for the strict 
inequality
\[
\mathrm{QEC}(G)<\mathrm{QEC}(G\star K_{m+1}),
\]
where $G\star K_{m+1}$ is the star product,
namely, the graph obtained by joining a graph $G$ 
and the complete graph $K_{m+1}$ at a single vertex,
see Theorem \ref{02thm:strict inequality}.
We then prove that 
the QE constants of paths form a strictly increasing sequence
as in \eqref{0eqn:basic scale}, see Theorem \ref{03thm:QEC(Path)}.

In Section \ref{sec:Classification} we prove the main results.
Case of $n=2$ is simple,
in fact, condition \eqref{01eqn:determination of family}
characterizes the complete graphs, see Theorem \ref{05thm: G_2}.
For a general case the first useful result is that
any graph with $\mathrm{QEC}(G)<-1/2$ is
diamond-free, claw-free, $C_4$-free and $C_5$-free,
see Corollary \ref{04cor:forbidden subgraphs}.
Then, using the explicit values of $\mathrm{QEC}(K_m\star K_n)$
we obtain an explicit list for case of $n=3$,
that is, a series of graphs $K_n\star K_2$ with $n\ge2$ 
and one sporadic $K_3\star K_3$, see Theorem \ref{05thm: G_3}.
As a result, $\mathrm{QEC}(P_4)$ is the smallest accumulation
point of the QE constants.
We also provide examples of graphs $G$ satisfying
$\mathrm{QEC}(G)=\mathrm{QEC}(P_4)$.

\bigskip
{\bfseries Acknowledgements:}
NO thanks Institut Teknologi Bandung for their kind hospitality,
where this work was completed in March 2019.
The support by JSPS Open Partnership Joint Research Project
``Extremal graph theory, algebraic graph theory and mathematical
approach to network science'' (2017--18) is gratefully acknowledged.
He also thanks Professor J. Koolen for stimulating discussion.

\section{Quadratic Embedding Constants}

\subsection{Definition and Basic Properties}

A graph $G=(V,E)$ is a pair of 
a non-empty set $V$ of vertices
and a set $E$ of edges,
i.e., $E$ is a subset of $\{\{i,j\}\,;\, i,j\in V, i\neq j\}$.
A graph is called finite if $V$ is a finite set.
Throughout this paper by a graph we mean a finite graph.

If $\{i,j\}\in E$, we write $i\sim j$ for simplicity.
A finite sequence of vertices $i_0,i_1,\dots,i_m\in V$ is called
an \textit{$m$-step walk} if
$i_0\sim i_1\sim \dotsb\sim i_m$.
In that case we say that $i_0$ and $i_m$ are connected by a walk
of length $m$.
A graph is called \textit{connected} if 
any pair of vertices are connected by a walk.

Let $G=(V,E)$ be a connected graph.
For $i,j \in V$ with $i\neq j$ let $d(i,j)=d_G(i,j)$ denote
the length of a shortest walk connecting $i$ and $j$.
By definition we set $d(i,i)=0$.
Then $d(i,j)$ becomes a metric on $V$,
which we call the \textit{graph distance}.
The \textit{diameter} of $G$ is defined by
\[
\mathrm{diam}(G)=\max\{d(i,j)\,;\, i,j\in V\}.
\]
The \textit{distance matrix} of $G$ is defined by
\[
D=D_G=[d(i,j)]_{i,j\in V}\,.
\]
 
Let $G=(V,E)$ be a connected graph with $|V|\ge2$.
The \textit{quadratic embedding constant}
(\textit{QE constant} for short) of $G$ is defined by
\begin{equation}\label{2eqn:def od QEC(G)}
\mathrm{QEC}(G)
=\max\{\langle f,Df \rangle\,;\, f\in C(V), \,
\langle f,f\rangle=1, \, \langle \1,f \rangle=0\},
\end{equation}
where $C(V)$ is the space of all $\mathbb{R}$-valued
functions on $V$ and
$\langle\cdot,\cdot\rangle$ the canonical inner product on $C(V)$.
Furthermore, $\bm{1}$ is the constant function defined by
$\bm{1}(x)=1$ for all $x\in V$,
and $\langle \1,f \rangle=\sum_{x\in V} f(x)$.
Indeed, identifying $C(V)$ with $\mathbb{R}^n$, $n=|V|$,
we see that the domain 
\[
\{f\in C(V)\,;\, \langle f,f\rangle=1,\, 
\langle \1,f \rangle=0\}
\]
is a compact manifold (in fact, a sphere of $n-2$ dimension).
Hence the quadratic function $\langle f,Df\rangle$ attains
the maximum on the above domain.

\begin{proposition}\label{01prop:QE class}
Let $G=(V,E)$ be a connected graph with $|V|\ge2$,
and $D=[d(i,j)]$ the distance matrix. 
Then the following conditions are equivalent:
\begin{enumerate}
\item[\upshape (i)] $G$ is of QE class, that is,
there exist a Euclidean space $\mathcal{H}$
and a map $\varphi:V\rightarrow\mathcal{H}$ such that
\[
\|\varphi(i)-\varphi(j)\|^2=d(i,j),
\qquad i,j\in V.
\]
\item[\upshape (ii)] $D$ is conditionally negative definite,
that is,
\[
\langle f,Df\rangle\le0
\quad\text{for all $f\in C(V)$ with
$\langle\bm{1},f\rangle=0$.}
\]
\item[\upshape (iii)] $\mathrm{QEC}(G)\le0$.
\end{enumerate}
\end{proposition}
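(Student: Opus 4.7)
The plan is to establish the three implications directly, treating (ii)$\Leftrightarrow$(iii) as essentially tautological and (i)$\Rightarrow$(ii) as a one-line expansion; the substantive content lives in (ii)$\Rightarrow$(i). The equivalence (ii)$\Leftrightarrow$(iii) follows because $\langle f,Df\rangle$ is homogeneous of degree two in $f$, so the supremum over the sphere $\{\langle f,f\rangle=1,\,\langle\1,f\rangle=0\}$ is non-positive if and only if $\langle f,Df\rangle\le 0$ on the entire hyperplane $\{\langle\1,f\rangle=0\}$; I would just remark this and move on. For (i)$\Rightarrow$(ii) I would expand
\[
\langle f,Df\rangle=\sum_{i,j\in V}f(i)f(j)\bigl(\|\varphi(i)\|^2+\|\varphi(j)\|^2-2\langle\varphi(i),\varphi(j)\rangle\bigr);
\]
the first two sums vanish because $\langle\1,f\rangle=0$, and what remains is $-2\bigl\|\sum_{i\in V}f(i)\varphi(i)\bigr\|^2\le 0$.

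The substantive direction is (ii)$\Rightarrow$(i), which I would handle by a Schoenberg-type centering. Fix a base vertex $o\in V$ and introduce the symmetric matrix
\[
B=[B_{ij}]_{i,j\in V},\qquad B_{ij}=\tfrac12\bigl(d(i,o)+d(j,o)-d(i,j)\bigr),
\]
whose $o$-th row and column are identically zero. Once $B$ is shown to be positive semidefinite, a Gram-matrix decomposition $B_{ij}=\langle\varphi(i),\varphi(j)\rangle$ in some $\mathbb{R}^k$ produces vectors with $\varphi(o)=0$, and a one-line computation gives $\|\varphi(i)-\varphi(j)\|^2=B_{ii}+B_{jj}-2B_{ij}=d(i,j)$, which is precisely (i). To establish $B\succeq 0$, I would take arbitrary $g\in C(V)$, reduce to the case $g(o)=0$ (since $B$ ignores the $o$-coordinate), and then extend $g$ to $f\in C(V)$ by setting $f(o)=-\sum_{i\neq o}g(i)$ so that $\langle\1,f\rangle=0$. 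Splitting $\sum_{i,j\in V}f(i)f(j)d(i,j)$ according to which indices equal $o$ and using $d(o,o)=0$ should yield the key identity
\[
\langle f,Df\rangle=-2\langle g,Bg\rangle,
\]
whence (ii) forces $\langle g,Bg\rangle\ge 0$.

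The main obstacle is really the bookkeeping behind the identity $\langle f,Df\rangle=-2\langle g,Bg\rangle$: one must align the mixed $o$-vs-non-$o$ cross terms in $\langle f,Df\rangle$ (which involve $f(o)=-\sum_{i\neq o}g(i)$) with the linear-in-$d(i,o)$ contributions from the definition of $B_{ij}$. Once this bridge between conditional negative definiteness of $D$ and positive semidefiniteness of the centered kernel $B$ is in place, the remainder is a routine application of the fact that every positive semidefinite symmetric matrix over $\mathbb{R}$ is a Gram matrix.
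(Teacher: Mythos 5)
Your proposal is correct and complete. Note, however, that the paper itself offers no proof of this proposition: it is stated without argument and attributed to Schoenberg, so there is no ``paper proof'' to compare against. What you have written out is precisely the classical Schoenberg--Young--Householder argument: (ii)$\Leftrightarrow$(iii) by homogeneity of the quadratic form, (i)$\Rightarrow$(ii) by expanding $\|\varphi(i)-\varphi(j)\|^2$ and killing the diagonal terms with $\langle\1,f\rangle=0$, and (ii)$\Rightarrow$(i) via the centered kernel $B_{ij}=\tfrac12\bigl(d(i,o)+d(j,o)-d(i,j)\bigr)$ realized as a Gram matrix. All steps check out; in particular $B_{ii}=d(i,o)$, $B_{oo}=B_{io}=0$, and $B_{ii}+B_{jj}-2B_{ij}=d(i,j)$ as you claim. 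One remark on the step you flag as the main obstacle: the bookkeeping for $\langle f,Df\rangle=-2\langle g,Bg\rangle$ collapses if, instead of splitting the double sum according to which indices equal $o$, you simply substitute $d(i,j)=d(i,o)+d(j,o)-2B_{ij}$ into $\sum_{i,j}f(i)f(j)d(i,j)$; the terms linear in $d(\cdot,o)$ vanish because $\langle\1,f\rangle=0$, and the remaining sum $\sum_{i,j}f(i)f(j)B_{ij}$ equals $\langle g,Bg\rangle$ because the $o$-th row and column of $B$ vanish. This makes the ``bridge'' a two-line computation rather than a case analysis.
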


The map $\varphi:V\rightarrow\mathcal{H}$ in the above condition (i)
is called a \textit{quadratic embedding} of $G$.
The above result is essentially due to Schoenberg 
\cite{Schoenberg1935,Schoenberg1938}
and motivated us to introduce the QE constant.

The graphs of QE class include the complete graphs 
$K_n$ ($n\ge2$), paths $P_n$ ($n\ge2$), and cycles $C_n$ $(n\ge3)$.
In fact,
\begin{equation}\label{02eqn:QEC(K_n)}
\mathrm{QEC}(K_n)=-1, \qquad n\ge2,
\end{equation}
and
\begin{equation}\label{02eqn:QEC(C_n)}
\mathrm{QEC}(C_{2n+1})=-\frac{1}{4\cos^2\dfrac{\pi}{2n+1}}\,, 
\qquad
\mathrm{QEC}(C_{2n+2})=0,
\qquad n\ge1,
\end{equation}
while a closed expression for $\mathrm{QEC}(P_n)$ is not known.
It is also noted that
the QE constant of a tree is negative.
In fact, for any tree $G$ on $n$ vertices we have
\begin{equation}\label{02eqn:QEC(tree)}
\mathrm{QEC}(G)\le -\frac{2}{2n-3}\,,
\qquad n\ge3.
\end{equation}
However, \eqref{02eqn:QEC(tree)} is a rather rough estimate 
and its refinement is an interesting question, 
see \cite[Section 5]{MO-2018}.

\begin{proposition}\label{01prop:isometrically embedded subgraphs}
Let $G=(V,E)$ be a connected graph and 
$H=(W,F)$ a connected subgraph of $G$ with $|W|\ge2$.
If $H$ is isometrically embedded in $G$, i.e.,
\[
d_H(i,j)=d_G(i,j)
\quad\text{for all $i,j\in W$},
\]
then we have
\[
\mathrm{QEC}(H)\le \mathrm{QEC}(G).
\]
\end{proposition}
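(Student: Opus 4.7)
The plan is to reduce the inequality to a simple observation about extension by zero, exploiting the monotonicity of the QE constant under the natural restriction of admissible test functions.

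First, I would fix any $g \in C(W)$ satisfying $\langle g, g\rangle_{W} = 1$ and $\langle \mathbf{1}_{W}, g\rangle_{W} = 0$, and I would define its zero extension $\tilde g \in C(V)$ by $\tilde g(x) = g(x)$ for $x \in W$ and $\tilde g(x) = 0$ for $x \in V \setminus W$. The key point is that this extension preserves the two normalization constraints defining the admissible set in \eqref{2eqn:def od QEC(G)}: clearly $\langle \tilde g, \tilde g\rangle_{V} = \langle g, g\rangle_{W} = 1$, and $\langle \mathbf{1}_{V}, \tilde g\rangle_{V} = \langle \mathbf{1}_{W}, g\rangle_{W} = 0$ because the values outside $W$ contribute nothing.

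Next, I would compare the two quadratic forms. Writing
\[
\langle \tilde g, D_G \tilde g\rangle_{V}
= \sum_{i,j \in V} d_G(i,j)\,\tilde g(i)\,\tilde g(j)
= \sum_{i,j \in W} d_G(i,j)\,g(i)\,g(j),
\]
the isometric embedding assumption $d_G(i,j) = d_H(i,j)$ for all $i,j \in W$ converts the last expression into $\langle g, D_H g\rangle_{W}$. Hence the value of the $H$-quadratic form at $g$ coincides exactly with the value of the $G$-quadratic form at $\tilde g$.

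Finally, since $\tilde g$ is an admissible test function for $\mathrm{QEC}(G)$ as defined in \eqref{2eqn:def od QEC(G)}, we obtain $\langle g, D_H g\rangle_{W} = \langle \tilde g, D_G \tilde g\rangle_{V} \le \mathrm{QEC}(G)$. Taking the supremum over all admissible $g \in C(W)$ yields $\mathrm{QEC}(H) \le \mathrm{QEC}(G)$, which is the desired inequality. There is no real obstacle here: the argument is essentially a one-line variational monotonicity, and the only place where one must be careful is that the isometric embedding hypothesis, rather than mere subgraph inclusion, is precisely what makes the two matrix entries agree on $W \times W$ and thereby identifies the two quadratic forms.
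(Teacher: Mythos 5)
Your proposal is correct and follows essentially the same route as the paper: extend an admissible test function on $W$ by zero to $V$, check that the two constraints are preserved, and use the isometric embedding hypothesis to identify $\langle g,D_Hg\rangle$ with $\langle \tilde g,D_G\tilde g\rangle$. The only cosmetic difference is that the paper starts from a maximizer of the $H$-form while you quantify over all admissible $g$ and take the supremum at the end; both are equally valid.
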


\begin{proof}
Take $f\in C(W)$ such that
\[
\mathrm{QEC}(H)=\langle f,D_Hf\rangle,
\quad
\langle f,f\rangle_W=1, 
\quad \langle \1,f \rangle_W=0,
\]
where $\langle\cdot,\cdot\rangle_W$ denotes 
the inner product on $C(W)$.
Define $\tilde{f}\in C(V)$ in such a way that
$\tilde{f}(x)=f(x)$ for $x\in W$ and $\Tilde{f}(x)=0$ otherwise.
Then $\tilde{f}$ satisfies
$\langle \tilde{f},\Tilde{f}\rangle_V=1$ 
and $\langle \1,\Tilde{f} \rangle_V=0$.
Since $H$ is isometrically embedded in $G$,
the distance matrix $D_H$ is a submatrix of $D_G$.
Hence,
\[
\mathrm{QEC}(H)=\langle f,D_Hf\rangle
=\langle \tilde{f},D_G\tilde{f}\rangle,
\]
where the last quantity 
is bounded by $\mathrm{QEC}(G)$ by definition.
\end{proof}

\begin{corollary}
Let $P_n$ be the path on $n$ vertices.
Then we have
\begin{equation}\label{01eqn:scale by paths}
\mathrm{QEC}(P_2)
\le \mathrm{QEC}(P_3)
\le\dotsb\le \mathrm{QEC}(P_n)
\le \mathrm{QEC}(P_{n+1})
\le\dotsb.
\end{equation}
\end{corollary}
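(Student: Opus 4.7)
The plan is to deduce the chain of inequalities \eqref{01eqn:scale by paths} directly from Proposition \ref{01prop:isometrically embedded subgraphs} by realizing each $P_n$ as an isometrically embedded subgraph of $P_{n+1}$.

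First I would fix a standard labelling. Write $P_{n+1}=(W,F)$ with vertex set $W=\{0,1,\dots,n\}$ and edges $\{k,k+1\}$ for $0\le k\le n-1$, and regard $P_n$ as the subgraph $H=(W',F')$ induced on $W'=\{0,1,\dots,n-1\}$. Clearly $H$ is connected and isomorphic to $P_n$.

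The key verification is the isometric embedding $d_{P_n}(i,j)=d_{P_{n+1}}(i,j)$ for $i,j\in W'$. This is immediate because in either $P_n$ or $P_{n+1}$ the graph distance between $i$ and $j$ equals $|i-j|$: any walk that leaves $W'$ by stepping to vertex $n$ must step back along the same edge, so restricting a shortest walk in $P_{n+1}$ to $W'$ does not increase its length. I would state this one line of justification and then simply invoke Proposition \ref{01prop:isometrically embedded subgraphs} to conclude $\mathrm{QEC}(P_n)\le \mathrm{QEC}(P_{n+1})$. Iterating in $n$ yields the whole chain \eqref{01eqn:scale by paths}.

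There is essentially no obstacle here; the proof is a one-line application of the general monotonicity principle for isometrically embedded subgraphs. The only thing to be mildly careful about is that the corollary only asserts the weak inequalities, not the strict ones promised in \eqref{0eqn:basic scale}; upgrading $\le$ to $<$ is postponed to Section \ref{sec:QE Constants of Paths} where the star-product criterion (Theorem \ref{02thm:strict inequality}) will be available, since $P_{n+1}=P_n\star K_2$.
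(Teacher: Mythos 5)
Your proof is correct and follows exactly the paper's route: the paper states this corollary as an immediate consequence of Proposition \ref{01prop:isometrically embedded subgraphs}, using (as in Subsection \ref{03subsec:QE Constants of Paths}) that $P_n$ is isometrically embedded in $P_{n+1}$ since the distance between $i$ and $j$ is $|i-j|$ in both. Your remark that the strict inequalities are deferred to the star-product criterion also matches the paper's organization.
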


\begin{corollary}\label{02cor:QEC and diameter}
Let $G=(V,E)$ be a connected graph with $|V|\ge2$.
\begin{enumerate}
\setlength{\itemsep}{0pt}
\item[\upshape (1)] If $\mathrm{diam}(G)\ge d$, then
$\mathrm{QEC}(P_{d+1})\le \mathrm{QEC}(G)$.
\item[\upshape (2)] If $\mathrm{QEC}(G)<\mathrm{QEC}(P_{d+1})$,
then $\mathrm{diam}(G)\le d-1$.
\end{enumerate}
\end{corollary}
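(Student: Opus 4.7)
The plan is to deduce both parts of the corollary directly from Proposition \ref{01prop:isometrically embedded subgraphs}, observing that part (2) is merely the contrapositive of part (1). So the whole task reduces to establishing part (1).

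For part (1), if $\mathrm{diam}(G)\ge d$, I would pick vertices $i,j\in V$ realizing a distance $d_G(i,j)\ge d$, and then fix a shortest walk $i=i_0,i_1,\dots,i_m=j$ connecting them, where $m=d_G(i,j)\ge d$. Truncating after the first $d$ steps gives a sequence $i_0,i_1,\dots,i_d$ of $d+1$ vertices; I would first argue these vertices are pairwise distinct (otherwise a shorter walk from $i_0$ to $i_d$ would exist, contradicting that the original walk is a shortest walk) so that the induced subgraph on $\{i_0,\dots,i_d\}$ contains a copy $H$ of $P_{d+1}$ as a spanning subgraph.

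The key step is then to verify that $H$ is isometrically embedded in $G$. Concretely, for $0\le a<b\le d$, the path distance is $d_H(i_a,i_b)=b-a$, whereas if $d_G(i_a,i_b)<b-a$ held, then concatenating the subwalk $i_0,\dots,i_a$, any shortest walk from $i_a$ to $i_b$, and the subwalk $i_b,\dots,i_d$ would yield a walk from $i_0$ to $i_d$ of length strictly less than $d$, contradicting minimality of the initial shortest walk. Hence $d_H(i_a,i_b)=d_G(i_a,i_b)$ and Proposition \ref{01prop:isometrically embedded subgraphs} yields $\mathrm{QEC}(P_{d+1})=\mathrm{QEC}(H)\le\mathrm{QEC}(G)$, completing the proof of (1).

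Part (2) follows by contraposition: the negation of the conclusion of (1) is $\mathrm{QEC}(G)<\mathrm{QEC}(P_{d+1})$, and the negation of its hypothesis is $\mathrm{diam}(G)<d$, i.e., $\mathrm{diam}(G)\le d-1$. I do not anticipate any real obstacle here; the only mild subtlety is the isometric-embedding verification for the shortest subpath, but this is a standard argument of the kind just sketched.
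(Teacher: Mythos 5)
Your proposal is correct and follows exactly the route the paper intends: the paper dismisses the proof as ``straightforward from Proposition \ref{01prop:isometrically embedded subgraphs},'' and what you supply --- a geodesic of length $d$ whose initial segment is an isometrically embedded copy of $P_{d+1}$ (subpaths of geodesics being geodesics), followed by the contrapositive for part (2) --- is precisely the standard argument being alluded to. No gaps; the isometric-embedding check you flag as the only subtlety is handled correctly.
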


The proofs are straightforward from
Proposition \ref{01prop:isometrically embedded subgraphs}.
In fact, as is shown 
in Subsection \ref{03subsec:QE Constants of Paths},
the inequalities in \eqref{01eqn:scale by paths} are strict.

Next we derive a useful criterion for isometric embedding. 

\begin{lemma}\label{03lem:isometric embedding}
Let $G=(V,E)$ be a connected graph and
$H=(W,F)$ a connected subgraph.
\begin{enumerate}
\setlength{\itemsep}{0pt}
\item[\upshape(1)] If $H$ is isometrically embedded,
then $H$ is an induced subgraph of $G$.
\item[\upshape(2)] If $H$ is an induced subgraph of $G$ and
$\mathrm{diam\,}(H)\le2$, then
$H$ is isometrically embedded in $G$.
\end{enumerate}
\end{lemma}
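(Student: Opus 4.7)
\medskip

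My plan is to handle the two parts separately, both being short comparisons between $d_H$ and $d_G$ on the vertex set $W$ of $H$.

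For part (1), I would argue by checking edges directly. Since $H$ is a subgraph of $G$, every edge of $H$ is already an edge of $G$, so only the converse inclusion needs proof. Take $i,j\in W$ with $\{i,j\}\in E$; then $d_G(i,j)=1$, and by the isometric embedding hypothesis $d_H(i,j)=d_G(i,j)=1$, which forces $\{i,j\}\in F$. Thus the edge set $F$ of $H$ agrees with $\{\{i,j\}\subset W\,;\,\{i,j\}\in E\}$, i.e.\ $H$ is induced. This step is essentially a one-line observation.

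For part (2), I would use the universal inequality $d_G(i,j)\le d_H(i,j)$ for $i,j\in W$, which holds because any walk in $H$ is a walk in $G$, and then show the reverse inequality case by case according to the value of $d_H(i,j)$. The diameter hypothesis restricts us to $d_H(i,j)\in\{0,1,2\}$. The case $d_H(i,j)=0$ is trivial, and the case $d_H(i,j)=1$ follows from $H\subset G$. The only real step is $d_H(i,j)=2$: here $\{i,j\}\notin F$, so since $H$ is \emph{induced} we also have $\{i,j\}\notin E$, hence $d_G(i,j)\ge 2$, and combined with the universal inequality we conclude $d_G(i,j)=2=d_H(i,j)$.

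The key conceptual point — which is really the only thing that could be called an obstacle — is recognizing why the diameter bound $\mathrm{diam}(H)\le 2$ is exactly what one needs: for $d_H(i,j)\ge 3$ the induced-subgraph property alone does not prevent $G$ from containing a shortcut through vertices outside $W$, so the argument cannot be pushed further without an additional hypothesis. Once this is understood, the proof reduces to the two short case analyses above, and no further machinery (e.g.\ Proposition \ref{01prop:isometrically embedded subgraphs}) is needed.
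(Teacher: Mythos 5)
Your proposal is correct and follows essentially the same route as the paper: part (1) by checking that $d_G(i,j)=1$ forces $d_H(i,j)=1$ under the isometry hypothesis, and part (2) by a case analysis on $d_H(i,j)\in\{0,1,2\}$, where the only substantive case $d_H(i,j)=2$ is settled by combining $d_G\le d_H$ with the induced-subgraph property to exclude $d_G(i,j)=1$. The paper phrases that last exclusion as a contradiction argument rather than your direct contrapositive, but the content is identical.
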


\begin{proof}
Let $d_G$ and $d_H$ be the graph distances of $G$ and $H$,
respectively.

(1) Let $i,j\in W$ and assume that they are adjacent in $G$.
Then $d_G(i,j)=1$ and by assumption we have $d_H(i,j)=1$,
which means that $i$ and $j$ are adjacent in $H$ too.
Therefore, $H$ is an induced subgraph of $G$.

(2) Let $i,j\in W$. Then $d_H(i,j)\le2$ by assumption.
If $d_H(i,j)=0$, then $i=j$ and hence $d_G(i,j)=0$.
Suppose that $d_H(i,j)=1$. 
Then $i$ and $j$ are adjacent in $H$, so are in $G$.
Hence $d_G(i,j)=1$.
Finally, suppose that $d_H(i,j)=2$.
Obviously, $i\neq j$ so that $1\le d_G(i,j)\le2$.
If $d_G(i,j)=1$, then $i$ and $j$ are adjacent in $G$
and so are in $H$ since $H$ is an induced subgraph.
Then we obtain $d_H(i,j)=1$, which is contradiction.
Therefore, we have $d_G(i,j)=2$.
Consequently, $d_H(i,j)=d_G(i,j)$ for all $i,j\in W$,
which means that $H$ is isometrically embedded in $G$.
\end{proof}

\begin{proposition}\label{02prop:QEC estimated from below by diam 2}
Let $G$ be a connected graph,
and $H$ a connected and induced subgraph of $G$.
If $\mathrm{diam}(H)\le2$, we have
\[
\mathrm{QEC}(H)\le\mathrm{QEC}(G).
\]
\end{proposition}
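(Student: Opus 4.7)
The plan is essentially to chain together the two immediately preceding results. The hypothesis gives us that $H$ is a connected induced subgraph of $G$ with $\mathrm{diam}(H)\le 2$, which is precisely the hypothesis of Lemma \ref{03lem:isometric embedding}(2). Applying that lemma, we conclude that $H$ is isometrically embedded in $G$, i.e., $d_H(i,j)=d_G(i,j)$ for all $i,j\in W$.

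Once isometric embedding is established, I would invoke Proposition \ref{01prop:isometrically embedded subgraphs} directly: since $H$ is isometrically embedded in $G$ and $|W|\ge 2$ (which follows from $H$ being connected and the diameter condition making sense, but in the degenerate case $|W|=1$ the statement is vacuous as $\mathrm{QEC}$ is not defined), we obtain
\[
\mathrm{QEC}(H)\le\mathrm{QEC}(G),
\]
which is the desired conclusion.

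There is no real obstacle here; the proposition is a two-line corollary of Lemma \ref{03lem:isometric embedding}(2) and Proposition \ref{01prop:isometrically embedded subgraphs}. The only mild subtlety worth a sentence in the write-up is to note that the diameter assumption is essential: without it, an induced subgraph need not be isometrically embedded (a shorter path between two vertices of $H$ could pass through $V\setminus W$), and then the submatrix $D_H$ of distances-in-$H$ would fail to coincide with the corresponding submatrix of $D_G$, breaking the test-function argument in Proposition \ref{01prop:isometrically embedded subgraphs}.
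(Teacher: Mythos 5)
Your proposal is correct and is exactly the paper's own proof: apply Lemma \ref{03lem:isometric embedding}(2) to conclude that $H$ is isometrically embedded in $G$, then invoke Proposition \ref{01prop:isometrically embedded subgraphs}. The extra remark about why the diameter hypothesis is needed is accurate but not part of the paper's argument.
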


\begin{proof}
It follows from Lemma \ref{03lem:isometric embedding} (2) that
$H$ is isometrically embedded in $G$.
Then, by Proposition 
\ref{01prop:isometrically embedded subgraphs}
we see that $\mathrm{QEC}(H)\le\mathrm{QEC}(G)$.
\end{proof}

\subsection{Calculating QE Constants}
\label{02subsec:Calculating QE Constants}

Let $G$ be a connected graph on $V=\{1,2,\dots,n\}$
and identify $C(V)$ with $\mathbb{R}^n$ in a natural manner.
Recall that $\mathrm{QEC}(G)$ is the conditional 
maximum of the quadratic function $\langle f,Df \rangle$,
$f=[f_i]=[f(i)]\in C(V)\cong\mathbb{R}^n$, subject to 
\begin{align}
&\langle f,f\rangle=\sum_{i=1}^n f_i^2=1,
\label{02eqn:cond 1} \\
&\langle \1,f \rangle=\sum_{i=1}^n f_i=0.
\label{02eqn:cond 2}
\end{align}
The method of Lagrange multipliers is applied to
calculating QE constants.
For later use we review it quickly,
for more details see \cite{Obata-Zakiyyah2018}.

First we set
\begin{equation}\label{02eqn:def of F}
F(f,\lambda,\mu)
=\langle f,Df\rangle 
-\lambda(\langle f,f\rangle-1) 
-\mu \langle\1,f\rangle,
\end{equation}
where $f=[f_i]\in \mathbb{R}^n$, $\lambda\in\mathbb{R}$ and
$\mu\in\mathbb{R}$.
Since conditions \eqref{02eqn:cond 1} and \eqref{02eqn:cond 2}
define a sphere of $n-2$ dimension, which is smooth and compact,
the conditional maximum of $\langle f, Df\rangle$ under question 
is attained at a stationary points of $F(f,\lambda,\mu)$.

Let $\mathcal{S}$ be the set of stationary points of 
$F(f,\lambda,\mu)$, that is,
\[
\mathcal{S}
=\left\{(f=[f_i],\lambda,\mu)
\in\mathbb{R}^n\times\mathbb{R}\times\mathbb{R}\,,\,
\frac{\partial F}{\partial f_i}
=\frac{\partial F}{\partial \lambda}
=\frac{\partial F}{\partial \mu}=0
\right\}.
\]
Taking the derivatives of \eqref{02eqn:def of F}, we obtain
\[
\frac{\partial F}{\partial f_i}
=2\langle e_i,Df\rangle -2\lambda\langle e_i,f\rangle 
 -\mu \langle\1,e_i\rangle
 =\langle e_i,2(D-\lambda)f-\mu \1\rangle,
\]
where $\{e_i\}$ is the canonical basis of $\mathbb{R}^n$.
Hence 
${\partial F}/{\partial f_i}=0$ for all $1\le i\le n$ 
if and only if $2(D-\lambda)f-\mu \1=0$,
that is,
\begin{equation}\label{02eqn:cond 3}
(D-\lambda)f=\frac{\mu}{2}\,\1.
\end{equation}
Thus, $\mathcal{S}$ is the set of 
$(f,\lambda,\mu)
\in\mathbb{R}^n\times\mathbb{R}\times\mathbb{R}$ satisfying
\eqref{02eqn:cond 1}, \eqref{02eqn:cond 2} and \eqref{02eqn:cond 3}.
On the other hand,
for $(f,\lambda,\mu)\in \mathcal{S}$ we have
\begin{equation}
\langle f,Df\rangle
=\left\langle f,\lambda f +\frac{\mu}{2}\,\1 \right\rangle
=\lambda \langle f,f\rangle +\frac{\mu}{2} \langle f,\1\rangle
=\lambda.
\end{equation}
Thus we come to the following useful result.

\begin{proposition}\label{02prop:QEC}
Let $G$ be a connected graph on $n\ge3$ vertices
and $\mathcal{S}$ the set of stationary points of
$F(f,\lambda,\mu)$ defined by \eqref{02eqn:def of F}.
Then we have
\[
\mathrm{QEC}(G)
=\max\{\lambda\in\mathbb{R}\,;\, 
\text{$(f,\lambda,\mu)\in \mathcal{S}$ for some
$f\in\mathbb{R}^n$ and $\mu\in\mathbb{R}$}\}.
\]
\end{proposition}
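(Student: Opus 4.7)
The plan is to invoke the standard Lagrange multiplier theorem together with the two observations already made just before the statement: that the feasible set is a compact smooth manifold, and that at any point $(f,\lambda,\mu)\in\mathcal{S}$ one has the identity $\langle f,Df\rangle=\lambda$. Let $M=\{f\in\mathbb{R}^n\,;\,\langle f,f\rangle=1,\,\langle\1,f\rangle=0\}$. First I would check that $M$ is a smooth compact submanifold of $\mathbb{R}^n$ of dimension $n-2$: the gradients of the two constraint functions $g_1(f)=\langle f,f\rangle-1$ and $g_2(f)=\langle\1,f\rangle$ are $2f$ and $\1$ respectively, and on $M$ these are linearly independent (if $2f=c\1$ then $\langle\1,f\rangle=0$ forces $c=0$, hence $f=0$, contradicting $\langle f,f\rangle=1$). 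This regularity is what is needed to apply Lagrange multipliers legitimately.

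Next, since $f\mapsto\langle f,Df\rangle$ is continuous and $M$ is compact and nonempty (this uses $n\ge3$, so that $M$ is a genuine sphere of positive dimension), the maximum defining $\mathrm{QEC}(G)$ is attained at some $f^{*}\in M$. By the Lagrange multiplier theorem there exist $\lambda^{*},\mu^{*}\in\mathbb{R}$ such that $(f^{*},\lambda^{*},\mu^{*})\in\mathcal{S}$; indeed, vanishing of $\partial F/\partial f_i$ for all $i$ is exactly \eqref{02eqn:cond 3}, while vanishing of $\partial F/\partial\lambda$ and $\partial F/\partial\mu$ reproduce the constraint equations \eqref{02eqn:cond 1} and \eqref{02eqn:cond 2}. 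The identity $\langle f^{*},Df^{*}\rangle=\lambda^{*}$ already derived in the text then shows $\mathrm{QEC}(G)=\lambda^{*}$ for this particular stationary point.

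Conversely, for any $(f,\lambda,\mu)\in\mathcal{S}$ the conditions \eqref{02eqn:cond 1} and \eqref{02eqn:cond 2} force $f\in M$, so $\langle f,Df\rangle=\lambda$ is a value attained by the quadratic form on $M$ and therefore satisfies $\lambda\le\mathrm{QEC}(G)$. Combining the two inequalities gives exactly the formula
\[
\mathrm{QEC}(G)=\max\{\lambda\in\mathbb{R}\,;\,(f,\lambda,\mu)\in\mathcal{S}\text{ for some }f,\mu\}.
\]

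There is essentially no hard step here — the whole proposition is a packaging of Lagrange multipliers — but the one place to be careful is the regularity check for the constraints, since without it one cannot conclude that every maximizer on $M$ arises as a stationary point of $F$. The rest is bookkeeping: matching the three partial-derivative conditions to \eqref{02eqn:cond 1}, \eqref{02eqn:cond 2}, \eqref{02eqn:cond 3}, and reusing the one-line computation $\langle f,Df\rangle=\langle f,\lambda f+(\mu/2)\1\rangle=\lambda$ that is already displayed in the text.
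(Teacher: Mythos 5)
Your proposal is correct and follows essentially the same route as the paper, which derives this proposition directly from the Lagrange multiplier setup in Subsection 2.2 (compactness of the constraint sphere, the equivalence of $\partial F/\partial f_i=0$ with \eqref{02eqn:cond 3}, and the identity $\langle f,Df\rangle=\lambda$ on $\mathcal{S}$). Your explicit check that the constraint gradients $2f$ and $\1$ are linearly independent on the feasible set is a detail the paper leaves implicit, but it does not change the argument.
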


\section{QE Constants of Paths}
\label{sec:QE Constants of Paths}

\subsection{A Criterion for
$\mathrm{QEC}(G)<\mathrm{QEC}(G\star K_{m+1})$}

Let $G_1$ and $G_2$ be two graphs with disjoint vertex sets.
Choose $o_1$ and $o_2$ as distinguished vertices
of $G_1$ and $G_2$, respectively.
A \textit{star product} of $G_1$ and $G_2$ 
with respect to $o_1$ and $o_2$ is (informally) defined to
be the graph obtained by joining $G_1$ and $G_2$ at
the distinguished vertices $o_1$ and $o_2$.
If there is no danger of confusion,
the star product is denoted simply by $G_1\star G_2$.

In this subsection we consider the case where
$G_1$ is an arbitrary connected graph 
and $G_2$ a complete graph.
To be precise, 
for $n\ge2$ and $m\ge1$ let $G=(V,E)$ be a connected graph 
on $V=\{1,2,\dots,n\}$
and $K_{m+1}$ the complete graph on $\{n,n+1,\dots,n+m\}$.
We set
\[
\Tilde{V}=V\cup\{n,n+1,\dots, n+m\}=\{1,2,\dots, n+m\},
\]
and 
\[
\Tilde{E}=E\cup\{\{i,j\}\,;\, n\le i<j\le n+m\}.
\]
Then $\Tilde{G}=(\Tilde{V},\Tilde{E})$ becomes 
the star product of $G$ and $K_{m+1}$,
which we denote simply by $\Tilde{G}=G\star K_{m+1}$.
Since $G$ is isometrically embedded in $\Tilde{G}$, 
it follows from Proposition 
\ref{01prop:isometrically embedded subgraphs} that
\begin{equation}\label{03eqn:main inequality}
\mathrm{QEC}(G)\le\mathrm{QEC}(\Tilde{G})
=\mathrm{QEC}(G\star K_{m+1}).
\end{equation}
We are interested in when the inequality 
\eqref{03eqn:main inequality} becomes strict.
\begin{figure}[htb]
\begin{center}
\vspace*{10pt}
\includegraphics[width=140pt]{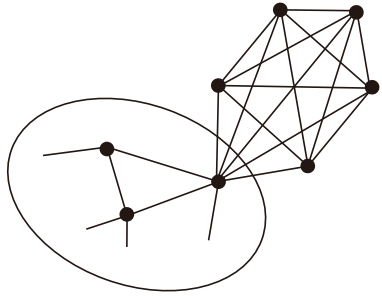}
\caption{$G \star K_{m+1}$ ($m=5$)}
\label{02fig:G star K_m+1}
\end{center}
\end{figure}

Let $D=D_G=[d(i,j)]$ and $\Tilde{D}=D_{\Tilde{G}}$ be 
the distance matrices of $G$ and $\Tilde{G}$, respectively.
Then we have
\begin{equation}\label{02eqn:D and tilde D}
\Tilde{D}
=\left[
\renewcommand{\arraystretch}{2}
\begin{array}{@{\,}ccc |ccc@{\,}}
 & D & & & S & \\ \hline
 & S^T & & & J-I & \\
\end{array}
\right],
\end{equation}
where $S=[s(i,j)]$ is the $n\times m$ matrix defined by
\begin{equation}\label{02eqn:def of S}
s(i,j)=d_G(i,n)+1,
\qquad
1\le i\le n, 
\quad
1\le j\le m,
\end{equation}
$J$ the matrix whose entries are all one
and $I$ the identity matrix.

\begin{theorem}\label{02thm:strict inequality}
Let $G$ be a connected graph on $V=\{1,2,\dots,n\}$
and $K_{m+1}$ the complete graph on $\{n,n+1,\dots,n+m\}$,
where $n\ge2$ and $m\ge1$.
Let $\Tilde{G}=(\Tilde{V},\Tilde{E})=G\star K_{m+1}$ 
be the star product defined as above.
If $\mathrm{QEC}(G)<0$ and 
there exists $f_0\in C(V)$ such that
\begin{equation}\label{02eqn:in main theorem}
\mathrm{QEC}(G)=\langle f_0, Df_0\rangle,
\quad
\langle f_0, f_0\rangle=1,
\quad
\langle \bm{1}, f_0\rangle=0
\end{equation}
and
\begin{equation}\label{02eqn:crucial condition}
f_0(n)\neq0,
\end{equation}
then we have
\begin{equation}\label{02eqn:strict inequality}
\mathrm{QEC}(G)<\mathrm{QEC}(G\star K_{m+1})<0.
\end{equation}
\end{theorem}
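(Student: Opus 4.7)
My plan is to base both inequalities on a single algebraic identity that reduces the quadratic form on $\tilde V$ to one on $V$ plus manifestly non-positive corrections. For any $\tilde f\in C(\tilde V)$ with $\langle\1,\tilde f\rangle=0$, write $\tilde f=(f,c_1,\dots,c_m)$ with $f\in C(V)$ and $c_j=\tilde f(n+j)$, and put $\alpha=\sum_{i\in V}f(i)$, $C=\sum_j c_j$, so that $\alpha+C=0$. Define $g\in C(V)$ by $g=f+Ce_n$, where $e_n$ is the canonical basis vector supported at vertex $n$; then $\sum_i g(i)=\alpha+C=0$, so $g$ is an admissible mean-zero test function on $V$. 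Using the block form of $\tilde D$ in~\eqref{02eqn:D and tilde D} together with the fact that $s(i,j)=d_G(i,n)+1$ is independent of $j$, a direct calculation yields
\[
\langle\tilde f,\tilde D\tilde f\rangle
=\langle g,Dg\rangle-\alpha^2-\sum_{j=1}^m c_j^2.
\]

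For the strict lower bound $\mathrm{QEC}(G)<\mathrm{QEC}(\tilde G)$, I use the given $f_0$ to build a one-parameter family of test functions. Set $\tilde f_c(i)=f_0(i)$ for $i\in V\setminus\{n\}$, $\tilde f_c(n)=f_0(n)-mc$, and $\tilde f_c(n+j)=c$ for every $j$. Then $C=mc$, $\alpha=-mc$, and the associated $g$ equals $f_0$ exactly, so the identity gives $\langle\tilde f_c,\tilde D\tilde f_c\rangle=\mathrm{QEC}(G)-m(m+1)c^2$, while $\|\tilde f_c\|^2=1-2mc\,f_0(n)+m(m+1)c^2$. Differentiating the Rayleigh quotient at $c=0$ produces $2m\,f_0(n)\,\mathrm{QEC}(G)$, which is nonzero by the two hypotheses $f_0(n)\neq 0$ and $\mathrm{QEC}(G)<0$. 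Hence for suitably small $c$ of one sign the quotient strictly exceeds $\mathrm{QEC}(G)$, giving $\mathrm{QEC}(\tilde G)>\mathrm{QEC}(G)$.

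For the strict upper bound $\mathrm{QEC}(\tilde G)<0$, the identity combined with $\langle g,Dg\rangle\le\mathrm{QEC}(G)\|g\|^2\le 0$ gives $\langle\tilde f,\tilde D\tilde f\rangle\le 0$ everywhere on the constraint sphere. Equality would force $\alpha=0$, every $c_j=0$, and $\langle g,Dg\rangle=0$; the last implies $g=0$, since otherwise $\mathrm{QEC}(G)<0$ would give $\langle g,Dg\rangle<0$. With $C=-\alpha=0$ we then have $f=g=0$, so $\tilde f=0$, contradicting $\|\tilde f\|=1$. Compactness of the sphere now yields $\mathrm{QEC}(\tilde G)<0$. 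The main conceptual step is spotting the correct pull-back $\tilde f\mapsto g$, which transfers all the mass on the new clique vertices onto the bridge vertex $n$; once this map and the identity above are in place, both halves of the theorem follow mechanically.
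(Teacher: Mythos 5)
Your proof is correct, and it takes a genuinely different route from the paper's. Your key identity $\langle\tilde f,\tilde D\tilde f\rangle=\langle g,Dg\rangle-\alpha^2-\sum_{j}c_j^2$ with $g=f+Ce_n$ checks out: the cross term is $2\langle f,Sc\rangle=2C\,Df(n)+2C\alpha$ because $s(i,j)$ does not depend on $j$, the clique block contributes $C^2-\sum_j c_j^2$, and $\langle g,Dg\rangle=\langle f,Df\rangle+2C\,Df(n)$ since $d(n,n)=0$; combining with $C=-\alpha$ gives exactly your formula, and it packages both halves of the theorem into one computation. The paper argues quite differently. For the left inequality it proceeds by contradiction: it parametrizes the constraint manifold near $\tilde f_0=(f_0,0)$ by explicit ellipses $\mathcal{M}_k$ (perturbing only the coordinates $k$, $n$ and one clique coordinate), shows that stationarity of $\Phi$ at the origin would force $Df_0(k)=Df_0(n)$ for all $k$, hence $Df_0=Df_0(n)\1$ and $\lambda_0=Df_0(n)\langle f_0,\1\rangle=0$, contradicting $\mathrm{QEC}(G)<0$. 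For the right inequality it invokes the general star-product estimate $\mathrm{QEC}(G_1\star G_2)\le\bigl(\mathrm{QEC}(G_1)^{-1}+\mathrm{QEC}(G_2)^{-1}\bigr)^{-1}$ imported from \cite{MO-2018}. Your argument is more self-contained (negativity of $\mathrm{QEC}(\tilde G)$ falls out of the same identity with no external input), and your explicit one-parameter family $\tilde f_c$, whose Rayleigh quotient has nonvanishing derivative $2m f_0(n)\,\mathrm{QEC}(G)$ at $c=0$, is a cleaner positive certificate of strictness than the paper's differentiate-along-a-curve contradiction. What the paper's route buys is the sharper quantitative bound $\mathrm{QEC}(G\star K_{m+1})\le\mathrm{QEC}(G)/(1-\mathrm{QEC}(G))$, which your soft compactness argument does not recover; for the theorem as stated, that extra precision is not needed.
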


\begin{proof}[Proof of the left-half of
\eqref{02eqn:strict inequality}]
For simplicity we set $\lambda_0=\mathrm{QEC}(G)$.
Then taking $f_0\in C(V)\cong \mathbb{R}^n$ as in the 
above statement, we have
\begin{equation}\label{02eqn:in proof 3.1 (1)}
\lambda_0=\langle f_0, Df_0\rangle<0,
\end{equation}
and 
\begin{equation}\label{02eqn:in proof 3.1 (12)}
\langle f_0, f_0\rangle=1,
\qquad
\langle \bm{1}, f_0\rangle=0.
\end{equation}
(In fact, existence of $f_0$ satisfying \eqref{02eqn:in main theorem}
follows from the definition of QE constant.
The essential assumption is \eqref{02eqn:crucial condition}.)
On the other hand,
$\mathrm{QEC}(\Tilde{G})$ is
given by the conditional maximum of the quadratic function:
\[
\Phi
=\langle \Tilde{f}, \Tilde{D}\Tilde{f}\rangle,
\quad
\Tilde{f}\in C(\Tilde{V})\cong\mathbb{R}^{n+m},
\]
subject to
\begin{equation}\label{02eqn:in proof 3.1 (2)}
\langle \Tilde{f},\Tilde{f}\rangle=1,
\qquad
\langle\bm{1},\Tilde{f}\rangle=0.
\end{equation}
It is convenient to use new variables $(\xi,\eta)\in
\mathbb{R}^m\times\mathbb{R}^m$ defined by
\[
\Tilde{f}=\Tilde{f}_0+
\begin{bmatrix}
\xi \\
\eta
\end{bmatrix},
\qquad
\Tilde{f}_0
=\begin{bmatrix}
f_0 \\
0 
\end{bmatrix}.
\]
By simple algebra conditions \eqref{02eqn:in proof 3.1 (2)}
are rephrased as
\begin{align}
&\langle \Tilde{f},\Tilde{f}\rangle=1
\quad\Leftrightarrow\quad
\langle\xi,\xi\rangle+\langle\eta,\eta\rangle
 +2\langle f_0,\xi\rangle=0, 
\label{02eqn:condition for g 1}\\
&\langle \bm{1},\Tilde{f}\rangle=0
\quad\Leftrightarrow\quad
\langle \bm{1},\xi\rangle+\langle\bm{1},\eta\rangle=0.
\label{02eqn:condition for g 2}
\end{align}
Moreover, we have
\begin{align}
\Phi
&=\langle \Tilde{f}, \Tilde{D}\Tilde{f}\rangle 
=\left\langle 
 \begin{bmatrix}
 f_0+\xi \\ \eta
 \end{bmatrix},
 \begin{bmatrix}
 D & S \\
 S^T & J-I
 \end{bmatrix} 
 \begin{bmatrix}
 f_0+\xi \\ \eta
 \end{bmatrix}
\right\rangle
\nonumber \\[2pt]
&=\lambda_0
 +2\langle f_0, D\xi\rangle
 +2\langle f_0,S\eta\rangle
 +2\langle \xi,S\eta\rangle
 +\langle\xi,D\xi\rangle
 +\langle\bm{1},\eta\rangle^2
 -\langle\eta,\eta\rangle,
\label{2eqn:in proof 3.1 (3)}
\end{align}
where we used the simple identity:
$\langle \eta, J\eta\rangle=\langle \bm{1},\eta\rangle^2$.
Using \eqref{02eqn:def of S} we obtain
\begin{align}
\langle f_0,S\eta\rangle
&=\sum_{i=1}^n f_0(i) S\eta(i)
=\sum_{i=1}^n f_0(i)\sum_{j=1}^m (d(i,n)+1)\eta(j) 
\nonumber \\
&=\sum_{i=1}^n f_0(i)(d_G(i,n)+1)\langle\bm{1},\eta\rangle
=(Df_0(n)+\langle\bm{1},f_0\rangle)\langle\bm{1},\eta\rangle.
\label{in proof 3.1 (6)}
\end{align}
Similarly,
\begin{equation}
\langle \xi,S\eta\rangle
=(D\xi(n)+\langle\bm{1},\xi\rangle)\langle\bm{1},\eta\rangle.
\label{in proof 3.1 (7)}
\end{equation}
Inserting \eqref{in proof 3.1 (6)}
and \eqref{in proof 3.1 (7)} into \eqref{2eqn:in proof 3.1 (3)},
and then applying \eqref{02eqn:condition for g 1},
\eqref{02eqn:condition for g 2} and \eqref{02eqn:in proof 3.1 (12)},
we obtain
\begin{align}
\Phi=\Phi(\xi,\eta)&=\lambda_0 
 +\langle \xi,D\xi\rangle + \langle \xi,\xi\rangle
 -\langle\bm{1},\xi\rangle^2 
\nonumber \\
&\quad +2\langle f_0, D\xi\rangle
 +2\langle f_0,\xi\rangle
 -2Df_0(n)\langle\bm{1},\xi\rangle 
 -2D\xi(n)\langle\bm{1},\xi\rangle,
\label{2eqn:in proof 3.1 (9)}
\end{align}
Thus, $\mathrm{QEC}(\Tilde{G})$ coincides with
the conditional maximum of $\Phi(\xi,\eta)$
subject to \eqref{02eqn:condition for g 1}
and \eqref{02eqn:condition for g 2}.
Here note that $\eta$ is implicitly contained in 
\eqref{2eqn:in proof 3.1 (9)} through those conditions.

To be precise, we put
\[
\mathcal{M}
=\left\{(\xi,\eta)\in\mathbb{R}^n\times\mathbb{R}^m\,;\,
\begin{array}{l}
\langle\xi,\xi\rangle+\langle\eta,\eta\rangle
 +2\langle f_0,\xi\rangle=0, \\
\langle \bm{1},\xi\rangle+\langle\bm{1},\eta\rangle=0
\end{array}
\right\}.
\]
Then we have
\[
\mathrm{QEC}(\Tilde{G})
=\max\{\Phi(\xi,\eta)\,;\, (\xi,\eta)\in\mathcal{M}\}.
\]
Since $(0,0)\in\mathcal{M}$ and
$\Phi(0,0)=\lambda_0=\mathrm{QEC}(G)$,
for $\mathrm{QEC}(G)<\mathrm{QEC}(\Tilde{G})$
it is sufficient to show that $\Phi(\xi,\eta)$ does not
attain a conditional maximum at $(\xi,\eta)=(0,0)$.
We will prove this by contradiction.

Suppose that $\Phi(\xi,\eta)$ attains 
a conditional maximum at $(\xi,\eta)=(0,0)$.
Then the directional derivative of 
$\Phi(\xi,\eta)$ at $(\xi,\eta)=(0,0)$ vanishes 
along any curve in $\mathcal{M}$ passing through $(0,0)$.
For $1\le k\le n-1$ we put
\[
\mathcal{N}_k
=\left\{(\xi=[\xi(i)],\eta=[\eta(j)])\in
\mathbb{R}^n\times\mathbb{R}^m\,;\,
\begin{array}{l}
\text{$\xi(i)=0$ except $i=k$ and $i=n$,} \\
\text{$\eta(j)=0$ except $j=1$} 
\end{array}
\right\}
\]
and 
\[
\mathcal{M}_k
=\mathcal{M}\cap \mathcal{N}_k\,.
\]
From \eqref{02eqn:condition for g 1} and
\eqref{02eqn:condition for g 2} we see that
$(\xi,\eta)\in \mathcal{N}_k$ belongs to $\mathcal{M}$ 
if and only if
\begin{gather}
2f_0(k)\xi(k)+2f_0(n)\xi(n)+\xi(k)^2+\xi(n)^2+\eta(1)^2=0, 
\label{03eqn:in proof 31}\\
\xi(k)+\xi(n)+\eta(1)=0.
\label{03eqn:in proof 32}
\end{gather}
Inserting \eqref{03eqn:in proof 32} into \eqref{03eqn:in proof 31},
we obtain 
\begin{equation}\label{03eqn:in proof 3.1 (11) ellipse}
\xi(k)^2+\xi(n)^2+\xi(k)\xi(n)+f_0(k)\xi(k)+f_0(n)\xi(n)=0,
\end{equation}
which determines an ellipse of positive radius
since $f_0(n)\neq0$ by assumption.
Namely, $\mathcal{M}_k$ is an ellipse 
in $\mathbb{R}^n\times\mathbb{R}^m$
passing through $(0,0)$.

Now consider the directional derivative of $\Phi(\xi,\eta)$ 
at $(\xi,\eta)=0$ along the ellipse $\mathcal{M}_k$.
From \eqref{03eqn:in proof 3.1 (11) ellipse}
we obtain easily that
\[
\frac{d\xi(n)}{d\xi(k)}\bigg|_{(\xi(k),\xi(n))=(0,0)}
=-\frac{f_0(k)+2\xi(k)+\xi(n)}{f_0(n)+\xi(k)+2\xi(n)}
 \bigg|_{(\xi(k),\xi(n))=(0,0)}
=-\frac{f_0(k)}{f_0(n)}.
\]
On the other hand, 
inserting \eqref{03eqn:in proof 31} and \eqref{03eqn:in proof 32}
into \eqref{2eqn:in proof 3.1 (9)},
we see that $\Phi=\Phi(\xi,\eta)$ on $\mathcal{M}_k$ becomes
\begin{align*}
\Phi=\Phi(\xi(k),\xi(n))
&=\lambda_0-2d(n,k)\xi(k)^2-2\xi(k)\xi(n) \\
&\qquad+2(f_0(k)+Df_0(k)-Df_0(n))\xi(k)
+2f_0(n)\xi(n).
\end{align*}
Then again by simple calculus, we come to
\begin{equation}\label{02eqn:in proof 3.1 (22)}
\frac{d\Phi}{d\xi(k)}\bigg|_{(\xi(k),\xi(n))=(0,0)}
=2Df_0(k)-2Df_0(n).
\end{equation}
Since $d\Phi/d\xi(k)$ at $\xi=0$ vanishes
for all $1\le k\le n-1$ by assumption,
it follows from \eqref{02eqn:in proof 3.1 (22)} that
$Df_0(k)=Df_0(n)$ for all $1\le k\le n-1$.
Hence $Df_0=Df_0(n)\bm{1}$ and we come to
\[
\lambda_0
=\langle f_0,Df_0\rangle
=Df_0(n)\langle f_0,\bm{1}\rangle
=0,
\]
which is in contradiction to $\lambda_0=\mathrm{QEC}(G)<0$.
\end{proof}

\begin{proposition}\label{02prop:star product estimate}
Let $G_1$ and $G_2$ be connected graphs with
$\mathrm{QEC}(G_1)<0$
and $\mathrm{QEC}(G_2)<0$.
Then
\begin{equation}\label{02eqn:general estimate for star product}
\mathrm{QEC}(G_1\star G_2)
\le\left(\frac{1}{\mathrm{QEC}(G_1)}
 +\frac{1}{\mathrm{QEC}(G_2)}\right)^{-1}<0.
\end{equation}
\end{proposition}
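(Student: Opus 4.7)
The plan is to split the test function into pieces on the two factors, show that the quadratic form decomposes correspondingly, and reduce to a scalar optimization whose maximum equals the harmonic-mean expression on the right.

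Let $V_1, V_2$ denote the vertex sets of $G_1, G_2$ with common distinguished vertex $o$, so that the vertex set of $G_1\star G_2$ is $V = V_1\cup V_2$ with $V_1\cap V_2=\{o\}$. Given $f\in C(V)$ with $\langle f, f\rangle=1$ and $\langle \1, f\rangle=0$, put $\alpha_i=\sum_{v\in V_i\setminus\{o\}} f(v)$, so that $\alpha_1+\alpha_2+f(o)=0$, and define $f_i\in C(V_i)$ by $f_i|_{V_i\setminus\{o\}}=f|_{V_i\setminus\{o\}}$ and $f_i(o)=-\alpha_i$. This forces $\langle \1, f_i\rangle_{V_i}=0$, so each $f_i$ is admissible for $\mathrm{QEC}(G_i)$. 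A block decomposition of $\langle f, Df\rangle$ along $V_1$ and $V_2$, applying $d_G(i,j) = d_{G_1}(i,o) + d_{G_2}(o,j)$ for $i\in V_1\setminus\{o\}$, $j\in V_2\setminus\{o\}$, yields the key identity
\[
\langle f, Df\rangle = \langle f_1, D_{G_1} f_1\rangle + \langle f_2, D_{G_2} f_2\rangle,
\]
together with the constraint relation $\|f_1\|^2+\|f_2\|^2+2f_1(o)f_2(o) = \|f\|^2 = 1$. The main computational effort goes into collapsing the cross-block terms in $\langle f, Df\rangle$; they cancel cleanly only because of the specific choice $f_i(o)=-\alpha_i$ together with $\alpha_1+\alpha_2+f(o)=0$.

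Writing $\mu_i = \mathrm{QEC}(G_i) < 0$ and applying the defining bound $\langle f_i, D_{G_i} f_i\rangle \le \mu_i\|f_i\|^2$ to each summand, we obtain
\[
\langle f, Df\rangle \le \mu_1 x + \mu_2 y, \qquad x := \|f_1\|^2, \ y := \|f_2\|^2,
\]
where $(x,y)$ satisfies $x\ge f_1(o)^2$, $y\ge f_2(o)^2$, and $x+y+2f_1(o)f_2(o) = 1$. Eliminating $f_1(o), f_2(o)$ via $(1-x-y)^2 = 4(f_1(o)f_2(o))^2 \le 4 f_1(o)^2 f_2(o)^2 \le 4xy$ shows that $(x,y)$ lies in the region $\{x,y\ge 0 : (1-x-y)^2\le 4xy\}$.

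It remains to maximize $\mu_1 x + \mu_2 y$ on this region. Substituting $v=x+y$ and $u=x-y$ converts the constraint to $u^2 \le 2v-1$ and the objective to $\tfrac{\mu_1+\mu_2}{2}v + \tfrac{\mu_1-\mu_2}{2}u$. Since $\mu_1+\mu_2<0$, the coefficient of $v$ is negative, so the maximum in $v$ is attained at the boundary $v=(u^2+1)/2$, reducing the problem to a concave quadratic in $u$ whose maximum is a direct computation and equals $\mu_1\mu_2/(\mu_1+\mu_2) = (1/\mu_1 + 1/\mu_2)^{-1}$. This bound is visibly negative since both $\mu_i$ are negative, completing the proof.
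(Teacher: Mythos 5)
Your proof is correct. Note that the paper does not prove this proposition itself but defers to \cite{MO-2018}, Section 4; your argument is essentially the one given there. The decomposition with $f_i(o)=-\alpha_i$ is exactly the key lemma of that reference: the cross-block terms collapse because the coefficient of $\sum_{i\in V_1\setminus\{o\}}d_{G_1}(i,o)f(i)$ works out to $2\bigl(f(o)+\alpha_2\bigr)=-2\alpha_1$ (and symmetrically for $G_2$), which matches the contribution of $f_1(o)=-\alpha_1$ inside $\langle f_1,D_{G_1}f_1\rangle$, and the constraint identity $\|f_1\|^2+\|f_2\|^2+2f_1(o)f_2(o)=1$ follows from $f(o)=-(\alpha_1+\alpha_2)$. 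The final optimization is also sound: on the boundary $v=(u^2+1)/2$ one has $v\ge|u|$ automatically, so the positivity constraints on $x,y$ are not active, and the concave quadratic in $u$ indeed peaks at $\mu_1\mu_2/(\mu_1+\mu_2)=\bigl(1/\mu_1+1/\mu_2\bigr)^{-1}<0$.
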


For the proof see \cite[Section 4]{MO-2018},
where a more precise estimate is obtained.

\begin{proof}[Proof of the right-half of 
\eqref{02eqn:strict inequality}] 
Note that $\mathrm{QEC}(K_{m+1})=-1$ for all $m\ge1$.
It then follows immediately from
Proposition \ref{02prop:star product estimate} that
\[
\mathrm{QEC}(G\star K_m)
\le\left(\frac{1}{\mathrm{QEC}(G)}
 +\frac{1}{-1}\right)^{-1}
=\frac{\mathrm{QEC}(G)}{1-\mathrm{QEC}(G)}<0.
\]
Here condition \eqref{02eqn:crucial condition} is not necessary.
\end{proof}

\begin{remark}\normalfont\rmfamily
For the strict inequality of the left-half of 
\eqref{02eqn:strict inequality} condition
\eqref{02eqn:crucial condition} is necessary.
We give a simple example.
Consider the graph $G$ on five verices
and $\Tilde{G}=G\star K_2$ on six vertices
as is illustrated in Figure \ref{fig:in remark}.
By direct computation we easily obtain
\[
\mathrm{QEC}(G)=\mathrm{QEC}(\Tilde{G})=-\frac{2}{2+\sqrt2}\,.
\]
In fact, $\mathrm{QEC}(G)$ is attained by 
\[
f_0=c
\begin{bmatrix}
\pm1 \\
\mp1 \\
\pm(\sqrt2+1) \\
\mp(\sqrt2+1) \\
0
\end{bmatrix},
\qquad
c=\sqrt{\frac{2-\sqrt2}{8}}.
\]
Indeed, $f_0(5)=0$ and
condition \eqref{02eqn:crucial condition} is fulfilled.
More examples will appear in Subsection
\ref{Bearded Complete Graphs}.
While, it is not clear whether 
$\mathrm{QEC}(G)=\mathrm{QEC}(\Tilde{G})$ follows from
$f_0(n)=0$.
\end{remark}

\begin{figure}[htb]
\begin{center}
\vspace*{10pt}
\includegraphics[height=80pt]{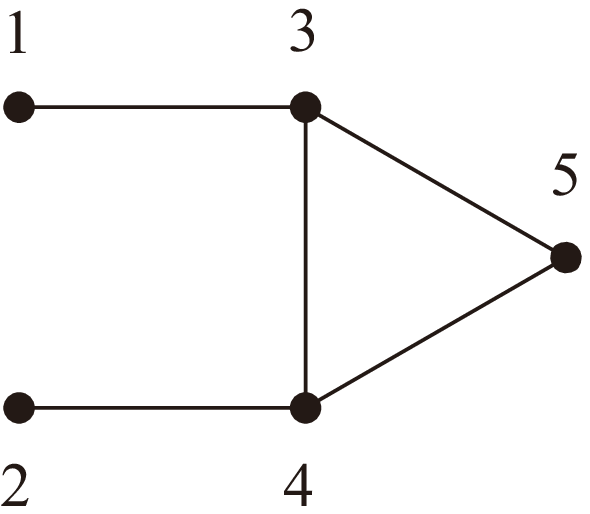}
\qquad\qquad
\includegraphics[height=80pt]{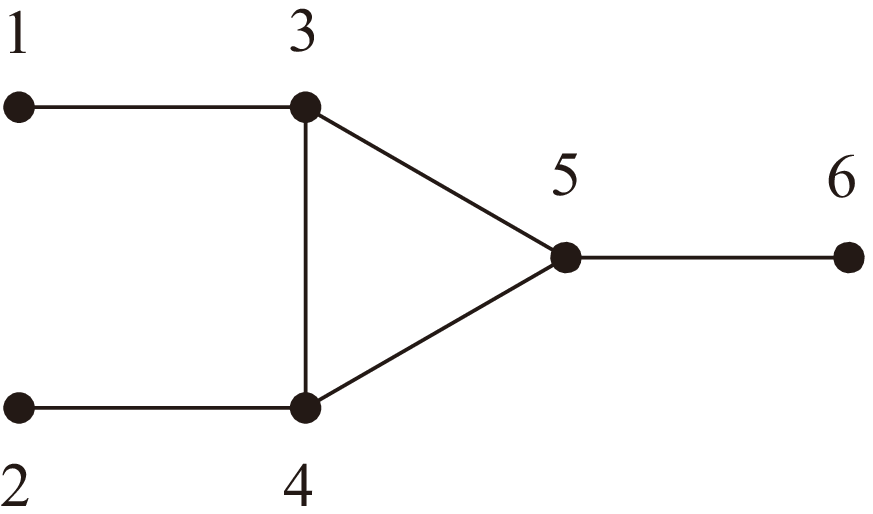}
\end{center}
\caption{An example of $f_0(5)=0$}
\label{fig:in remark}
\end{figure}
%

\subsection{QE Constants of Paths}
\label{03subsec:QE Constants of Paths}

For $n\ge1$ let $P_n$ be the path on $\{1,2,\dots,n\}$.
Since $P_n$ is isometrically embedded in $P_{n+1}$, we have
\[
\mathrm{QEC}(P_n)\le \mathrm{QEC}(P_{n+1}),
\qquad n\ge2.
\]
In this section we prove that the above inequality is strict.

\begin{theorem}\label{03thm:QEC(Path)}
For $n\ge2$ we have
$\mathrm{QEC}(P_n)< \mathrm{QEC}(P_{n+1})$.
\end{theorem}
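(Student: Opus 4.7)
The plan is to apply Theorem \ref{02thm:strict inequality} with $G=P_n$ and $m=1$, taking the distinguished vertex of $P_n$ to be the endpoint labeled $n$. Attaching a $K_2$ at the endpoint of $P_n$ merely appends one edge to that endpoint, so $P_n\star K_2=P_{n+1}$. Hence the desired strict inequality follows once the two hypotheses of Theorem \ref{02thm:strict inequality} are verified: (i) $\mathrm{QEC}(P_n)<0$, and (ii) some optimizer $f_0\in C(V)$ satisfies $f_0(n)\neq 0$ at the endpoint.

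Hypothesis (i) is free: for $n=2$ one has $\mathrm{QEC}(P_2)=-1$ by \eqref{02eqn:QEC(K_n)}, while for $n\ge 3$ the tree bound \eqref{02eqn:QEC(tree)} gives $\mathrm{QEC}(P_n)\le -2/(2n-3)<0$. So the entire content of the theorem lies in establishing (ii), and this is what I would focus on.

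For (ii), I would invoke the stationary-point description from Proposition \ref{02prop:QEC}: an optimizer $f_0$, together with $\lambda=\mathrm{QEC}(P_n)$ and some $\mu$, satisfies $(D-\lambda)f_0=(\mu/2)\1$, together with $\langle\1,f_0\rangle=0$ and $\|f_0\|=1$. Since the distance matrix of $P_n$ is $D=[\,|i-j|\,]$, subtracting the $k$-th equation from the $(k{+}1)$-st cancels $\mu$, and splitting the resulting sum into the ranges $j\le k$, $j=k{+}1$, $j\ge k{+}2$ turns the differences of absolute values into $+1$, $-1$, $-1$ respectively. Using $\langle\1,f_0\rangle=0$ then collapses this to the telescoping identity
\[
\lambda\,(f_0(k+1)-f_0(k))=-2\sum_{j=k+1}^{n}f_0(j),
\qquad 1\le k\le n-1.
\]

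The rest is a descending induction. Suppose toward contradiction that $f_0(n)=0$. Taking $k=n-1$ yields $\lambda f_0(n-1)=0$, and because $\lambda<0$ we conclude $f_0(n-1)=0$. Taking $k=n-2$ then gives $\lambda(f_0(n-1)-f_0(n-2))=-2(f_0(n-1)+f_0(n))=0$, hence $f_0(n-2)=0$; continuing this cascade forces $f_0\equiv 0$, contradicting $\|f_0\|=1$. Therefore $f_0(n)\neq 0$, and Theorem \ref{02thm:strict inequality} furnishes $\mathrm{QEC}(P_n)<\mathrm{QEC}(P_{n+1})$. I do not foresee a genuine obstacle; the one spot that demands care is the bookkeeping in the three-range split that produces the telescoping identity, everything else being essentially automatic from the machinery already developed.
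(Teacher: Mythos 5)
Your proposal is correct and follows essentially the same route as the paper: verify via the Lagrange stationarity equation $(D-\lambda)f_0=(\mu/2)\1$ that the optimizer satisfies $f_0(n)\neq0$ by a descending induction (assuming $f_0(n)=0$ forces $f_0\equiv0$, contradicting $\|f_0\|=1$), then invoke Theorem \ref{02thm:strict inequality} with $P_n\star K_2=P_{n+1}$. The only cosmetic difference is that you subtract consecutive coordinate equations to get a telescoping identity before specializing, whereas the paper compares the $k$-th and $(k-1)$-th coordinates directly under the inductive vanishing hypothesis; the computations are equivalent.
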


\begin{proof}
The distance matrix of $P_n$ is given by
$D=[d(i,j)]$ with $d(i,j)=|i-j|$, $1\le i,j\le n$.
According to the general method described in
Subsection \ref{02subsec:Calculating QE Constants} let
$\mathcal{S}$ be the set of 
$(f,\lambda,\mu)\in\mathbb{R}^n\times \mathbb{R}\times\mathbb{R}$
such that
\begin{gather}
(D-\lambda)f=\frac{\mu}{2}\,\bm{1},
\label{04eqn:in proof 4.1 (1)}\\
\langle f,f\rangle=1,
\label{04eqn:in proof 4.1 (2)}\\
\langle \bm{1},f\rangle=0.
\label{04eqn:in proof 4.1 (3)}
\end{gather}
Then $\lambda_0=\mathrm{QEC}(P_n)$ is 
the maximum of $\lambda\in\mathbb{R}$ such that
$(f,\lambda,\mu)\in\mathcal{S}$
for some $f\in\mathbb{R}^n$ and $\mu\in\mathbb{R}$.
It is readily known that $\lambda_0<0$.
By virtue of Theorem \ref{02thm:strict inequality}
it is sufficient to show that there exists 
$(f_0=[f_0(i)],\lambda_0,\mu_0)\in\mathcal{S}$
such that $f_0(n)\neq0$.

In fact, we will prove a slightly stronger result:
for any $(f=[f(i)],\lambda,\mu)\in\mathcal{S}$ we have $f(n)\neq0$.
First assume that $(f,\lambda,\mu)\in\mathcal{S}$ fulfills
\begin{equation}\label{03eqn:f(j)=0}
f(j)=0 \quad \text{for}\quad k\le j\le n,
\end{equation}
where $2\le k\le n$.
We will derive $f(k-1)=0$.
The $k$-th coordinate of \eqref{04eqn:in proof 4.1 (1)} is
given by
\begin{equation}\label{03eqn:in proof 4.1 (6)}
\sum_{j=1}^n |k-j|f(j)-\lambda f(k)=\frac{\mu}{2}
\end{equation}
and by assumption \eqref{03eqn:f(j)=0} we have
\begin{equation}\label{03eqn:in proof 4.1 (8)}
\sum_{j=1}^{k-1} (k-j)f(j)-\lambda f(k)=\frac{\mu}{2}\,.
\end{equation}
Similarly, looking at the $(k-1)$-th coordinate 
of \eqref{04eqn:in proof 4.1 (1)}, we obtain
\begin{equation}\label{04eqn:in proof 4.1 (7)}
\sum_{j=1}^{k-1} (k-1-j)f(j)-\lambda f(k-1)=\frac{\mu}{2}\,.
\end{equation}
On the other hand, by \eqref{04eqn:in proof 4.1 (3)}
and \eqref{03eqn:f(j)=0} we have
\[
\sum_{j=1}^{k-1} f(j)=0.
\]
Then \eqref{04eqn:in proof 4.1 (7)} becomes
\begin{equation}\label{03eqn:in proof 4.1 (9)}
\sum_{j=1}^{k-1} (k-j)f(j)-\lambda f(k-1)=\frac{\mu}{2}\,.
\end{equation}
Comparing \eqref{03eqn:in proof 4.1 (8)} and 
\eqref{03eqn:in proof 4.1 (9)},
we obtain
\[
\lambda(f(k-1)-f(k))=0.
\]
Since $\lambda\le \lambda_0<0$, we 
obtain $f(k-1)=f(k)=0$ as desired.
Thus, by induction we see that $f(n)=0$ implies that
$f(j)=0$ for all $1\le j\le n$,
which is in contradiction to condition \eqref{04eqn:in proof 4.1 (2)}.
Consequently, $f(n)\neq0$ for any $(f,\lambda,\mu)\in\mathcal{S}$.
\end{proof}

\begin{proposition}\label{03prop:lim odf P_n}
We have
\[
\lim_{n\rightarrow\infty}\mathrm{QEC}(P_n)=-\frac12\,.
\]
\end{proposition}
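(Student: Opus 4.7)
The plan rests on a summation-by-parts identity. For any $f \in C(V)$ with $\langle \1, f\rangle = 0$, set $S_k = \sum_{i=1}^k f(i)$, so that $S_0 = S_n = 0$. Then
\[
\langle f, Df\rangle = -2\sum_{k=1}^{n-1} S_k^2, \qquad \langle f, f\rangle = \sum_{i=1}^n (S_i - S_{i-1})^2.
\]
I would prove the first identity by writing $|i-j| = \#\{k\in\{1,\dots,n-1\} : \min(i,j)\le k < \max(i,j)\}$, substituting into $\sum_{i,j}|i-j|f(i)f(j)$, and using $\sum_{j>k} f(j) = -S_k$ (from $\langle \1, f\rangle = 0$); the second identity is just telescoping.

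With this identity, the upper bound $\mathrm{QEC}(P_n) \le -1/2$ follows from the elementary inequality $(a-b)^2 \le 2(a^2+b^2)$: using $S_0 = S_n = 0$,
\[
\langle f, f\rangle \le \sum_{i=1}^n 2(S_i^2 + S_{i-1}^2) = 4\sum_{k=1}^{n-1} S_k^2 = -2\langle f, Df\rangle,
\]
so $\langle f, Df\rangle \le -\langle f, f\rangle/2$. Combined with monotonicity (Theorem~\ref{03thm:QEC(Path)}), this yields $\lim_n \mathrm{QEC}(P_n) \le -1/2$.

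For the matching lower bound, I would reverse the parametrization: take $S_k = (-1)^{k+1}\sin(k\pi/n)$ for $k = 0, 1, \dots, n$ (so $S_0 = S_n = 0$) and define $f_n \in C(V)$ by $f_n(i) = S_i - S_{i-1}$; then $\langle \1, f_n\rangle = S_n - S_0 = 0$ automatically. A sum-to-product calculation gives $f_n(i) = 2(-1)^{i+1}\cos(\pi/(2n))\sin((2i-1)\pi/(2n))$, and the standard evaluations $\sum_{k=1}^{n-1}\sin^2(k\pi/n) = n/2$ and $\sum_{i=1}^n\sin^2((2i-1)\pi/(2n)) = n/2$ yield $\sum_{k=1}^{n-1} S_k^2 = n/2$ and $\|f_n\|^2 = 2n\cos^2(\pi/(2n))$. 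Hence
\[
\frac{\langle f_n, Df_n\rangle}{\|f_n\|^2} = -\frac{1}{2\cos^2(\pi/(2n))} \longrightarrow -\frac{1}{2} \quad\text{as } n\to\infty,
\]
so $\liminf_n \mathrm{QEC}(P_n) \ge -1/2$, completing the proof.

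The main obstacle is guessing the right parametrization for the lower bound. A slowly varying ansatz $f_n(i) = g(i/n)$ coming from a smooth function on an interval would give a ratio of order $O(1/n^2)$ and fail; the oscillating factor $(-1)^{k+1}$ attached to $S_k$ is essential, because it picks out the largest-eigenvalue mode of the tridiagonal quadratic form $\sum(S_i - S_{i-1})^2$ and keeps $\sum S_k^2/\|f\|^2$ bounded away from zero. Once this parametrization is in hand, all the trigonometric sums involved are standard.
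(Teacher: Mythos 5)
Your proof is correct. Note first that the paper does not prove Proposition \ref{03prop:lim odf P_n} at all: it simply defers to \cite[Section 5]{MO-2018}, where ``a precise estimate of $\mathrm{QEC}(P_n)$ from below is obtained.'' So your argument is necessarily a different (and self-contained) route. I checked the key steps: the summation-by-parts identity $\langle f,Df\rangle=-2\sum_{k=1}^{n-1}S_k^2$ is valid (for fixed $k$ the pairs with $\min(i,j)\le k<\max(i,j)$ contribute $2\bigl(\sum_{i\le k}f(i)\bigr)\bigl(\sum_{j>k}f(j)\bigr)=-2S_k^2$), the inequality $(a-b)^2\le 2(a^2+b^2)$ together with $S_0=S_n=0$ gives $\mathrm{QEC}(P_n)\le-1/2$ for every $n$, and your oscillating test function gives the Rayleigh quotient $-1/(2\cos^2(\pi/(2n)))\to-1/2$; the constraint $\langle\1,f_n\rangle=S_n-S_0=0$ is automatic. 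What your approach buys is transparency and a sharp explicit two-sided bound: in fact $-1/(2\cos^2(\pi/(2n)))$ reproduces exactly the values $-1$, $-2/3$, $-2/(2+\sqrt2)$, $-4/(5+\sqrt5)$, $-2/(2+\sqrt3)$ listed in the paper for $n=2,\dots,6$, which strongly suggests your trial function is the actual maximizer (a point worth flagging, given the paper's remark that no closed formula for $\mathrm{QEC}(P_n)$ is known). Your closing observation about why a non-oscillating ansatz fails is also apt: without the $(-1)^{k+1}$ factor the quotient degenerates. The only cosmetic remark is that the upper bound $\mathrm{QEC}(P_n)\le-1/2$ already gives $\limsup_n\mathrm{QEC}(P_n)\le-1/2$ on its own, so the appeal to the monotonicity of Theorem \ref{03thm:QEC(Path)} is not needed for the limit to exist once both one-sided bounds are in place.
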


For the proof see \cite[Section 5]{MO-2018},
where a precise estimate of $\mathrm{QEC}(P_n)$ from below is
obtained.

\section{Classification of Graphs Along $\mathrm{QEC}(P_n)$}
\label{sec:Classification}

\subsection{Formulation of Problem}

Combining Theorem \ref{03thm:QEC(Path)} and 
Proposition \ref{03prop:lim odf P_n}, we come to
\begin{equation}\label{04eqn:basic scale}
\mathrm{QEC}(P_2)<
\mathrm{QEC}(P_3)<\cdots<
\mathrm{QEC}(P_n)<
\mathrm{QEC}(P_{n+1})<
\cdots\rightarrow -\frac12.
\end{equation}
In fact, the first few are given as follows:
\begin{align*}
&\mathrm{QEC}(P_2)=-1,\\
&\mathrm{QEC}(P_3)=-\frac23=-0.6666\cdots,\\
&\mathrm{QEC}(P_4)=-\frac{2}{2+\sqrt2}=-(2-\sqrt2)=-0.5857\cdots,\\
&\mathrm{QEC}(P_5)=-\frac{4}{5+\sqrt5}
=-\frac{5-\sqrt5}{5}=-0.5527\cdots,\\
&\mathrm{QEC}(P_6)=-\frac{2}{2+\sqrt3}=-(4-2\sqrt3)=-0.5358\cdots.
\end{align*}
A closed formula for $\mathrm{QEC}(P_n)$ is not known.

Our main interest along \eqref{04eqn:basic scale}
is to characterize the family of graphs $G$ satisfying
\begin{equation}\label{04eqn:class n}
\mathrm{QEC}(P_n)\le \mathrm{QEC}(G)<\mathrm{QEC}(P_{n+1}),
\qquad n\ge2,
\end{equation}
in terms of geometric or combinatorial properties of graphs.
We are also interested in the graphs $G$ satisfying
\begin{equation}\label{04eqn:QEC<-1/2}
\mathrm{QEC}(G)<-\frac12\,.
\end{equation}

We first recall the following simple fact
mentioned in Corollary \ref{02cor:QEC and diameter} (2).

\begin{proposition}\label{03prop:diam}
Let $n\ge2$.
If $\mathrm{QEC}(G)<\mathrm{QEC}(P_{n+1})$, 
then $\mathrm{diam}(G)\le n-1$.
\end{proposition}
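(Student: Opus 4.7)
The plan is to prove the contrapositive: if $\mathrm{diam}(G)\ge n$, then $\mathrm{QEC}(P_{n+1})\le\mathrm{QEC}(G)$, contradicting the hypothesis. This is simply Corollary~\ref{02cor:QEC and diameter}(1) applied with $d=n$, so once that corollary is in hand the statement is immediate. Since the excerpt notes that both parts of Corollary~\ref{02cor:QEC and diameter} follow ``straightforwardly'' from Proposition~\ref{01prop:isometrically embedded subgraphs}, I would spell out the underlying argument as follows.

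First I would pick vertices $u,v\in V$ realising the diameter, so $k:=d_G(u,v)\ge n$, and choose a shortest walk $u=v_0\sim v_1\sim\cdots\sim v_k=v$ of length $k$. Restricting to the first $n+1$ vertices $W=\{v_0,v_1,\dots,v_n\}$, let $H$ be the induced subgraph of $G$ on $W$.

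The key routine check is that $H\cong P_{n+1}$ and that the embedding $W\hookrightarrow V$ is isometric. For any $0\le i<j\le n$ one has $d_G(v_i,v_j)\le j-i$ by the sub-walk $v_i\sim\cdots\sim v_j$; conversely, if $d_G(v_i,v_j)<j-i$ then splicing a shorter walk in place of this segment would produce a walk from $u$ to $v$ of length less than $k$, contradicting the minimality of the original walk. Hence $d_G(v_i,v_j)=j-i$. In particular, no two $v_i,v_j$ with $|i-j|\ge2$ are adjacent in $G$, so $H$ is exactly the path $P_{n+1}$, and $d_H(v_i,v_j)=|i-j|=d_G(v_i,v_j)$ shows the embedding is isometric.

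Finally, Proposition~\ref{01prop:isometrically embedded subgraphs} gives $\mathrm{QEC}(P_{n+1})=\mathrm{QEC}(H)\le\mathrm{QEC}(G)$, which contradicts $\mathrm{QEC}(G)<\mathrm{QEC}(P_{n+1})$. There is no real obstacle here: the only substantive ingredient is Proposition~\ref{01prop:isometrically embedded subgraphs}, and the fact that a shortest path in $G$ spans an isometrically embedded $P_{k+1}$ is a standard elementary observation, not a source of difficulty.
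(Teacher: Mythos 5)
Your proposal is correct and follows the paper's own route: the paper simply observes that this is Corollary~\ref{02cor:QEC and diameter}(2), whose proof is the same contrapositive argument via an isometrically embedded geodesic path and Proposition~\ref{01prop:isometrically embedded subgraphs}. You have merely written out the standard details (that a segment of a shortest walk spans an isometric copy of $P_{n+1}$) which the paper leaves implicit.
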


Next we provide simple criteria for
\eqref{04eqn:QEC<-1/2} in terms of forbidden subgraphs.
Let $K_4\backslash\{e\}$ denote the \textit{diamond},
that is, the graph obtained by deleting one edge
from the complete graph $K_4$,
see Figure \ref{fig:subgraphs for -1/2}.
Let $K_{m,n}$ denote the complete bipartite graph with
two parts of $m$ and $n$ vertices.
In particular, $K_{1,n}$ is called a \textit{star}
and $K_{1,3}$ a \textit{claw},
see Figure \ref{fig:subgraphs for -1/2}.
\begin{figure}[htb]
\begin{center}
\vspace*{10pt}
\includegraphics[width=70pt]{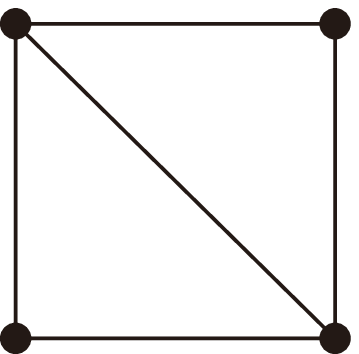}
\hspace*{70pt}
\includegraphics[width=70pt]{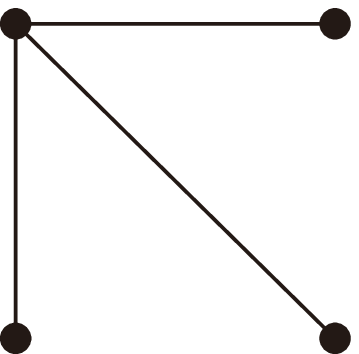}
\caption{$K_4\backslash\{e\}$ (diamond) and $K_{1,3}$ (claw)}
\label{fig:subgraphs for -1/2}
\end{center}
\end{figure}

\begin{proposition}\label{04prop:sufficient condition for QEC>=-1/2}
If a connected graph $G$ contains an induced subgraph
isomorphic to a diamond
$K_4\backslash \{e\}$ or a claw $K_{1,3}$,
then $\mathrm{QEC}(G)\ge -1/2$.
\end{proposition}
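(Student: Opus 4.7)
The plan is to reduce the statement to a computation on the two forbidden subgraphs via Proposition~\ref{02prop:QEC estimated from below by diam 2}. Since both the claw $K_{1,3}$ and the diamond $K_4\setminus\{e\}$ have diameter two, any induced subgraph $H$ of $G$ isomorphic to one of them satisfies $\mathrm{QEC}(H)\le\mathrm{QEC}(G)$. It therefore suffices to establish
\[
\mathrm{QEC}(K_{1,3})\ge-\tfrac12
\qquad\text{and}\qquad
\mathrm{QEC}(K_4\setminus\{e\})\ge-\tfrac12,
\]
each of which I would obtain by exhibiting a single explicit admissible test function.

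For the claw, with center $o$ and leaves $v_1,v_2,v_3$, the distance matrix has $d(o,v_i)=1$ and $d(v_i,v_j)=2$. I would take $f=\frac{1}{\sqrt{12}}(-3,1,1,1)$, which has unit norm and satisfies $\langle\1,f\rangle=0$, and a routine evaluation of $Df$ yields $\langle f,Df\rangle=-1/2$. For the diamond, label the two degree-three vertices $a,b$ and the two degree-two vertices $c,d$, so that all pairwise distances equal one except $d(c,d)=2$. The natural candidate is $f=\tfrac12(1,1,-1,-1)$, which is admissible, and again a short direct computation yields $\langle f,Df\rangle=-1/2$.

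The guiding heuristic behind both choices is symmetry: in each case $f$ is taken constant on the orbits of a large subgroup of the automorphism group (all permutations of the three leaves for the claw; the swaps $a\leftrightarrow b$ and $c\leftrightarrow d$ for the diamond), which collapses the orthogonality constraint $\langle\1,f\rangle=0$ to a single linear equation and reduces $\langle f,Df\rangle$ to a handful of arithmetic operations. I do not foresee a genuine obstacle here, since Proposition~\ref{02prop:QEC estimated from below by diam 2} already packages the transfer from a diameter-two induced subgraph to the ambient graph; the only verification required beyond the two sample evaluations is that the claw and the diamond really have diameter at most two, which is immediate from inspection.
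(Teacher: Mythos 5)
Your proposal is correct and follows essentially the same route as the paper: both reduce to the diameter-two induced subgraphs via Proposition~\ref{02prop:QEC estimated from below by diam 2} and then use the fact that $\mathrm{QEC}(K_{1,3})$ and $\mathrm{QEC}(K_4\backslash\{e\})$ are at least $-1/2$. The only difference is that the paper cites the exact values $\mathrm{QEC}(K_{1,3})=\mathrm{QEC}(K_4\backslash\{e\})=-1/2$ from the literature, whereas you make the lower bound self-contained with explicit test vectors (both of which check out).
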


\begin{proof}
It is easily verified \cite[Section 5]{Obata-Zakiyyah2018} that 
\[
\mathrm{QEC}(K_4\backslash \{e\})=\mathrm{QEC}(K_{1,3})=-\frac12\,.
\]
Moreover we have 
$\mathrm{diam}(K_4\backslash K_2)=\mathrm{diam}(K_{1,3})=2$.
It then follows from 
Proposition \ref{02prop:QEC estimated from below by diam 2}
that $\mathrm{QEC}(G)\ge -1/2$.
\end{proof}

\begin{proposition}\label{04prop:condition by cycles}
If a connected graph $G$ contains an induced subgraph
isomorphic to the cycle $C_4$, then $\mathrm{QEC}(G)\ge 0$.
If $G$ contains an induced subgraph
isomorphic to $C_5$, then 
\[
\mathrm{QEC}(G)\ge -\frac{2}{3+\sqrt5}=-0.3819\dots.
\]
\end{proposition}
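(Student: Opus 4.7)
The plan is to reduce both inequalities to the already-tabulated values of $\mathrm{QEC}(C_4)$ and $\mathrm{QEC}(C_5)$ given in \eqref{02eqn:QEC(C_n)}, combined with the monotonicity principle in Proposition \ref{02prop:QEC estimated from below by diam 2}.

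First I would observe the key geometric fact that $\mathrm{diam}(C_4)=\mathrm{diam}(C_5)=2$. Hence any induced copy of $C_4$ or $C_5$ inside a connected graph $G$ is, by Lemma \ref{03lem:isometric embedding}(2), automatically isometrically embedded in $G$. Applying Proposition \ref{02prop:QEC estimated from below by diam 2} then yields
\[
\mathrm{QEC}(C_4)\le\mathrm{QEC}(G)
\quad\text{or}\quad
\mathrm{QEC}(C_5)\le\mathrm{QEC}(G),
\]
depending on which cycle is present as an induced subgraph.

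The first assertion follows at once by inserting $\mathrm{QEC}(C_4)=0$ from \eqref{02eqn:QEC(C_n)}. For the second assertion I would use $\mathrm{QEC}(C_5)=-1/(4\cos^2(\pi/5))$ from the same formula and reduce the stated bound to the numerical identity
\[
\frac{1}{4\cos^2(\pi/5)}=\frac{2}{3+\sqrt5},
\]
which is verified by the well-known value $\cos(\pi/5)=(1+\sqrt5)/4$, giving $\cos^2(\pi/5)=(3+\sqrt5)/8$ and hence $4\cos^2(\pi/5)=(3+\sqrt5)/2$.

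Because the computational heart of the proposition — the exact formulas for $\mathrm{QEC}(C_{2n+1})$ and $\mathrm{QEC}(C_{2n+2})$ — has already been recorded in \eqref{02eqn:QEC(C_n)}, there is no genuine obstacle here. The only subtlety worth flagging is that the hypothesis must be \emph{induced} subgraph rather than just subgraph, since it is exactly the diameter-2 hypothesis of Lemma \ref{03lem:isometric embedding}(2) that promotes ``induced'' to ``isometrically embedded'' and thereby enables the monotonicity of QEC to be invoked.
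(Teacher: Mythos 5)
Your proof is correct and follows essentially the same route as the paper: the paper also cites the values $\mathrm{QEC}(C_4)=0$ and $\mathrm{QEC}(C_5)=-2/(3+\sqrt5)$ from \eqref{02eqn:QEC(C_n)} and then invokes the diameter-$2$ argument via Proposition \ref{02prop:QEC estimated from below by diam 2}, exactly as you do. Your explicit verification that $1/(4\cos^2(\pi/5))=2/(3+\sqrt5)$ is a welcome extra detail the paper leaves implicit.
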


\begin{proof}
We note that
\[
\mathrm{QEC}(C_4)=0,
\qquad
\mathrm{QEC}(C_5)=-\frac{2}{3+\sqrt5},
\]
see also \eqref{02eqn:QEC(C_n)}.
Then the assertion follows in a similar manner
as in the proof of
Proposition \ref{04prop:sufficient condition for QEC>=-1/2}.
\end{proof}

The following result is immediate from
Propositions \ref{04prop:sufficient condition for QEC>=-1/2}
and \ref{04prop:condition by cycles}. 

\begin{corollary}[forbidden subgraphs]
\label{04cor:forbidden subgraphs}
Any graph with $\mathrm{QEC}(G)< -1/2$ does not contain an
induced subgraph isomorphic to
a diamond $K_4\backslash\{e\}$,
a claw $K_{1,3}$,
a cycle $C_4$,
nor $C_5$.
In short,
any graph with $\mathrm{QEC}(G)< -1/2$ is
diamond-free, claw-free, $C_4$-free and $C_5$-free.
\end{corollary}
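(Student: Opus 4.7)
The plan is to derive the corollary directly as the contrapositive of Propositions \ref{04prop:sufficient condition for QEC>=-1/2} and \ref{04prop:condition by cycles}. I would assume that $G$ is a connected graph with $\mathrm{QEC}(G)<-1/2$ and that $G$ contains an induced subgraph $H$ isomorphic to one of the four listed graphs, and then derive a contradiction case by case.

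For $H\cong K_4\backslash\{e\}$ or $H\cong K_{1,3}$, Proposition \ref{04prop:sufficient condition for QEC>=-1/2} immediately gives $\mathrm{QEC}(G)\ge -1/2$, contradicting the hypothesis. For $H\cong C_4$, Proposition \ref{04prop:condition by cycles} yields $\mathrm{QEC}(G)\ge 0>-1/2$. The case $H\cong C_5$ requires one small numerical check: Proposition \ref{04prop:condition by cycles} gives $\mathrm{QEC}(G)\ge -2/(3+\sqrt{5})$, and I need $-2/(3+\sqrt{5})>-1/2$, which is equivalent to $4<3+\sqrt{5}$, i.e.\ $\sqrt{5}>1$, plainly true. (Equivalently, rationalizing gives $-2/(3+\sqrt{5})=(\sqrt{5}-3)/2\approx -0.382$, clearly above $-1/2$.)

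There is no real obstacle here; the corollary is essentially a compilation step. All of the substance already lives in Propositions \ref{04prop:sufficient condition for QEC>=-1/2} and \ref{04prop:condition by cycles}, which compute the QE constants of the four forbidden graphs and invoke Proposition \ref{02prop:QEC estimated from below by diam 2} via the fact that each of $K_4\backslash\{e\}$, $K_{1,3}$, $C_4$, $C_5$ has diameter at most $2$. The induced-subgraph assumption in the corollary matches exactly the hypothesis those propositions require, so no further verification is needed beyond the one inequality above.
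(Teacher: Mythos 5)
Your proposal is correct and matches the paper exactly: the paper states the corollary is immediate from Propositions \ref{04prop:sufficient condition for QEC>=-1/2} and \ref{04prop:condition by cycles}, which is precisely the contrapositive argument you give. Your explicit verification that $-2/(3+\sqrt{5})=(\sqrt{5}-3)/2>-1/2$ is the only (trivial) computation involved, and it is right.
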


\begin{remark}\normalfont\rmfamily
As an immediate consequence from
Corollary \ref{04cor:forbidden subgraphs},
the family of graphs with $\mathrm{QEC}(G)< -1/2$ 
forms a subfamily of the claw-free graphs.
On the other hand, 
claw-free graphs has been actively studied
with various classifications, see e.g., \cite{FFR1997}.
It would be interesting to revisit the classification
of claw-free graphs along with $\mathrm{QEC}(P_n)$.
\end{remark}

\subsection{Determining the class 
$\mathrm{QEC}(P_2)\le \mathrm{QEC}(G)<\mathrm{QEC}(P_3)$}

\begin{theorem}\label{05thm: G_2}
For a connected graph $G$ the inequality 
\begin{equation}\label{05eqn:condition for G_2}
\mathrm{QEC}(P_2)\le \mathrm{QEC}(G)<\mathrm{QEC}(P_3)
\end{equation}
holds if and only if $G=K_n$ for some $n\ge2$.
Moreover, $\mathrm{QEC}(P_2)= \mathrm{QEC}(K_n)$ for all $n\ge2$.
Therefore, there is no graph
$G$ such that $\mathrm{QEC}(P_2)<\mathrm{QEC}(G)<\mathrm{QEC}(P_3)$.
\end{theorem}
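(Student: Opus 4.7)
The plan rests on two facts already established in the excerpt: the explicit values $\mathrm{QEC}(K_n) = -1$ from equation \eqref{02eqn:QEC(K_n)} and $\mathrm{QEC}(P_3) = -2/3$ from the list after \eqref{04eqn:basic scale}, together with the diameter bound of Proposition \ref{03prop:diam}. Since $P_2 = K_2$, we already have $\mathrm{QEC}(P_2) = -1$, so the ``moreover'' statement $\mathrm{QEC}(P_2) = \mathrm{QEC}(K_n)$ reduces to quoting \eqref{02eqn:QEC(K_n)}.

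For the ``if'' direction, I would simply check that $G = K_n$ with $n\ge 2$ satisfies \eqref{05eqn:condition for G_2}: by \eqref{02eqn:QEC(K_n)}, $\mathrm{QEC}(K_n) = -1 = \mathrm{QEC}(P_2)$, and this is strictly smaller than $\mathrm{QEC}(P_3) = -2/3$.

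For the ``only if'' direction, let $G$ be a connected graph with $|V(G)|\ge 2$ satisfying \eqref{05eqn:condition for G_2}. Apply Proposition \ref{03prop:diam} with $n=2$: the strict inequality $\mathrm{QEC}(G) < \mathrm{QEC}(P_3)$ forces $\mathrm{diam}(G) \le 1$. Combined with connectedness and $|V(G)|\ge 2$, every two distinct vertices are adjacent, hence $G = K_m$ for some $m\ge 2$. The final sentence is then automatic: since $\mathrm{QEC}(G) = -1 = \mathrm{QEC}(P_2)$ in every admissible case, the left inequality in \eqref{05eqn:condition for G_2} is always saturated and no graph $G$ satisfies $\mathrm{QEC}(P_2) < \mathrm{QEC}(G) < \mathrm{QEC}(P_3)$.

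There is essentially no obstacle here; the work has been front-loaded into the diameter bound (Corollary \ref{02cor:QEC and diameter} and hence Proposition \ref{03prop:diam}) and into the evaluation of $\mathrm{QEC}(K_n)$ and $\mathrm{QEC}(P_3)$. The only care needed is to invoke Proposition \ref{03prop:diam} with $n=2$ rather than other values, and to remember the coincidence $P_2 = K_2$ that makes the left endpoint of \eqref{05eqn:condition for G_2} line up with the complete graphs.
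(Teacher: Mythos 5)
Your proposal is correct and follows essentially the same route as the paper: apply Proposition \ref{03prop:diam} with $n=2$ to conclude $\mathrm{diam}(G)\le 1$, hence $G$ is complete, and then use $\mathrm{QEC}(K_n)=-1=\mathrm{QEC}(P_2)$ from \eqref{02eqn:QEC(K_n)} to finish. The only difference is that you spell out the ``if'' direction and the final sentence explicitly, which the paper leaves as ``obvious.''
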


\begin{proof}
Suppose that a graph $G$ satisfies \eqref{05eqn:condition for G_2}.
Then by Proposition \ref{03prop:diam},
we have $\mathrm{diam}(G)=1$,
which means that $G$ is a complete graph.
On the other hand, it is known that
$\mathrm{QEC}(K_n)=-1=\mathrm{QEC}(P_2)$ for all $n\ge2$.
The assertion is then obvious.
\end{proof}


\subsection{Calculating $\mathrm{QEC}(K_n\star K_m)$}

We consider the star product of two complete graphs $K_n$ and $K_m$,
see Figure \ref{05fig:K_n*K_m}.
To be precise, let $n\ge1$ and $m\ge2$,
and consider the graphs 
$\Tilde{G}=(\Tilde{V},\Tilde{E})$,
where
\[
\Tilde{V}=\{1,2,\dots,n\}\cup\{n,n+1,\dots,n+m-1\}
\]
and
\[
\Tilde{E}=\{\{i,j\}\,;\, 1\le i<j\le n\}\cup
\{\{i,j\}\,;\, n\le i<j\le n+m-1\}.
\]
Obviously, we have $\Tilde{G}=K_n\star K_m$,
where the induced subgraphs spanned by
$\{1,2,\dots,n\}$ and by $\{n,n+1,\dots,n+m-1\}$ are
the complete graphs $K_n$ and $K_m$, respectively.
\begin{figure}[htb]
\begin{center}
\vspace*{10pt}
\includegraphics[height=90pt]{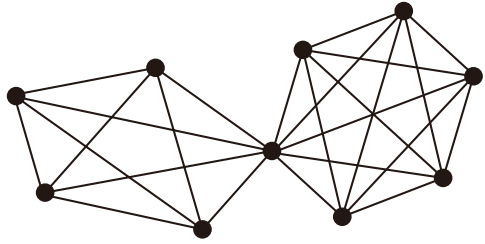}
\caption{$K_n\star K_m$ ($n=5, m=6$)}
\label{05fig:K_n*K_m}
\end{center}
\end{figure}

Let $\Tilde{D}$ be the distance matrix of 
$\Tilde{G}=K_n\star K_m$.
It is convenient to write $\Tilde{D}$ in the block matrices:
\begin{equation}\label{05eqn:D of K_n star K_m}
\Tilde{D}
=\left[
\renewcommand{\arraystretch}{2.4}
\begin{array}{@{\,}ccc |ccc@{\,}}
 & J-I & & & S & \\ \hline
 & S^T & & & J-I & \\
\end{array}
\right],
\qquad
S=
\begin{bmatrix}
2 & \cdots & 2 \\
\vdots &  & \vdots \\
2 & \cdots & 2 \\
1 & \cdots & 1 
\end{bmatrix},
\end{equation}
where $S$ is an $n\times (m-1)$ matrix.
The QE constant $\mathrm{QEC}(\Tilde{G})$ is the conditional 
maximum of 
\begin{equation}\label{04eqn:Phi for K_m*K_n}
\Phi=\langle \Tilde{f}, \Tilde{D}\Tilde{f}\rangle,
\qquad
\Tilde{f}\in C(\Tilde{V}),
\end{equation}
subject to 
\[
\langle \Tilde{f}, \Tilde{f}\rangle=1,
\qquad
\langle \bm{1}, \Tilde{f}\rangle=0.
\]
According to the block diagonal 
expression \eqref{05eqn:D of K_n star K_m}, we write
$\Tilde{f}=[f\,\,\,g]^T$, where $f\in \mathbb{R}^n$,
$g\in \mathbb{R}^{m-1}$.
Then \eqref{04eqn:Phi for K_m*K_n} becomes
\begin{align*}
\Phi
&=\Phi(f,g)
=\left\langle 
 \begin{bmatrix} f \\ g \end{bmatrix},
 \begin{bmatrix} J-I & S \\ S^T & J-I \end{bmatrix}
 \begin{bmatrix} f \\ g \end{bmatrix},
 \right\rangle
\\
&=\langle 1,f\rangle^2
 +\langle 1,g\rangle^2
 -\langle f,f\rangle
 -\langle g,g\rangle
 +4\langle \bm{1},f\rangle \langle \bm{1}, g\rangle
 -2f_n \langle \bm{1}, g\rangle,
\end{align*}
where we used 
\[
Sg=\langle\bm{1},g\rangle 
[2 \,\, 2 \cdots \,\, 2 \,\, 1 ]^T.
\]
Define
\[
F(f,g,\lambda,\mu)=
\Phi(f,g)
-\lambda(\langle f,f\rangle+\langle g,g\rangle-1)
-\mu(\langle \bm{1},f\rangle+\langle \bm{1},g\rangle)
\]
and let $\mathcal{S}$ be the set of its
stationary points $(f,g,\lambda,\mu)\in
\mathbb{R}^n\times\mathbb{R}^{m-1}\times \mathbb{R}\times\mathbb{R}$,
that is the solutions to
\begin{equation}\label{04eqn:stationary points equation}
\frac{\partial F}{\partial f_i}
=\frac{\partial F}{\partial g_j}
=\frac{\partial F}{\partial \lambda}
=\frac{\partial F}{\partial \mu}
=0,
\quad 1\le i\le n,
\,\,\,
1\le j\le m-1.
\end{equation}
Keeping in mind that
$-1<\mathrm{QEC}(\Tilde{G})<0$ unless $m=1$ or $n=1$,
we find after simple calculus that
the maximum of $\lambda$ appearing in the solution is
\[
\lambda=\frac{-mn+\sqrt{mn(m-1)(n-1)}}{m+n-1}
=-\frac{1}
 {1+\sqrt{\Big(1-\dfrac{1}{m}\Big)\Big(1-\dfrac{1}{n}\Big)}}\,,
\]
which coincides with $\mathrm{QEC}(\Tilde{G})$ by
the general theory mentioned in Subsection
\ref{02subsec:Calculating QE Constants}.
We have thus obtained the following result.

\begin{proposition}\label{04prop:QEC(K_n*K_m)}
For $m\ge1$ and $n\ge1$ with $m+n\ge3$ with we have
\[
\mathrm{QEC}(K_n\star K_m)
=-\frac{1}
 {1+\sqrt{\Big(1-\dfrac{1}{m}\Big)\Big(1-\dfrac{1}{n}\Big)}}\,.
\]
\end{proposition}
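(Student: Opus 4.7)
The plan is to apply Proposition \ref{02prop:QEC} directly to the block distance matrix \eqref{05eqn:D of K_n star K_m}, using the symmetry of $\tilde G=K_n\star K_m$ to collapse the stationary-point system to a single quadratic in $1+\lambda$. The automorphism group of $\tilde G$ contains $\mathrm{Sym}(n-1)\times\mathrm{Sym}(m-1)$ acting on the non-joining vertices, and both the quadratic form $\langle\tilde f,\tilde D\tilde f\rangle$ and the two constraints $\|\tilde f\|=1$, $\langle\bm{1},\tilde f\rangle=0$ are invariant under this action. Accordingly $\tilde D|_{\{\bm{1}\}^\perp}$ decomposes into isotypic components, and a direct inspection of \eqref{05eqn:D of K_n star K_m} shows that on the two standard-rep components (sign vectors supported on $\{1,\dots,n-1\}$ respectively $\{n+1,\dots,n+m-1\}$) $\tilde D$ acts as $-I$. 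Hence any stationary $\lambda>-1$ must come from the three-dimensional symmetric (trivial-rep) subspace, on which $\tilde f$ has the form $(\alpha,\dots,\alpha,\beta,\gamma,\dots,\gamma)$, with $\alpha$ repeated $n-1$ times, $\beta$ at the joining vertex, and $\gamma$ repeated $m-1$ times.

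Next I would compute $\tilde D\tilde f$ coordinatewise on this ansatz. Writing $a=(n-1)\alpha+\beta$ and $b=(m-1)\gamma$, one has $Sg=b[2,\dots,2,1]^T$ and $(J-I)f=a\bm{1}-f$, so the stationary condition $(\tilde D-\lambda)\tilde f=(\mu/2)\bm{1}$ produces three scalar equations in $\alpha,\beta,\gamma,\lambda,\mu$. Subtracting them in pairs eliminates $a$ and $\mu$, leaving the compact relations
\[
(1+\lambda)(\alpha-\beta)=(m-1)\gamma,\qquad (1+\lambda)(\gamma-\beta)=(n-1)\alpha,
\]
together with the zero-sum constraint $(n-1)\alpha+\beta+(m-1)\gamma=0$.

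Setting $u=1+\lambda$ and using the zero-sum constraint to eliminate one variable, the compatibility of the two relations above reduces to the quadratic
\[
(m+n-1)u^{2}+2(m-1)(n-1)u-(m-1)(n-1)=0.
\]
The key algebraic observation that makes the final expression clean is the identity $(m-1)(n-1)+(m+n-1)=mn$ inside the discriminant. The larger root is then $u=[-(m-1)(n-1)+\sqrt{mn(m-1)(n-1)}]/(m+n-1)$, so the maximum stationary $\lambda=u-1=[-mn+\sqrt{mn(m-1)(n-1)}]/(m+n-1)$. A short rationalization of $-1/\lambda$ collapses this to $1+\sqrt{(1-1/m)(1-1/n)}$, giving the claimed formula; the value $-1$ coming from the standard-rep components is strictly smaller, so the maximum really is attained in the symmetric sector.

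The main obstacle is the symmetry reduction in the first step: one must make certain that the maximum is actually attained on the symmetric three-parameter subspace and not on some non-symmetric stationary point. The cleanest way is the isotypic decomposition sketched above, which simultaneously computes the spectrum of $\tilde D|_{\{\bm{1}\}^\perp}$ and exhibits the $(n-2)+(m-2)$ eigenvalues equal to $-1$; once this is in hand, the remaining derivation is pure algebra.
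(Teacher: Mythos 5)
Your proposal is correct and follows essentially the same route as the paper: both apply the Lagrange-multiplier criterion of Proposition \ref{02prop:QEC} to the block distance matrix \eqref{05eqn:D of K_n star K_m} and extract the maximal stationary $\lambda$, which the paper compresses into ``after simple calculus we find \dots''. Your isotypic decomposition under $\mathrm{Sym}(n-1)\times\mathrm{Sym}(m-1)$ is a clean and valid way to justify the reduction to the three-parameter ansatz (since $(\tilde D-\lambda)f_{\mathrm{std}}=0$ with $\tilde Df_{\mathrm{std}}=-f_{\mathrm{std}}$ forces $f_{\mathrm{std}}=0$ whenever $\lambda\neq-1$), and the resulting quadratic $(m+n-1)u^2+2(m-1)(n-1)u-(m-1)(n-1)=0$ and its larger root reproduce exactly the paper's intermediate expression $\lambda=(-mn+\sqrt{mn(m-1)(n-1)})/(m+n-1)$.
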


\begin{corollary}
We have
\begin{align}
\mathrm{QEC}(P_3)
&=\mathrm{QEC}(K_2\star K_2)
<\mathrm{QEC}(K_3\star K_2)<\dotsb 
\nonumber \\
&\dotsb<\mathrm{QEC}(K_n\star K_2)<\dotsb
\rightarrow \mathrm{QEC}(P_4)=-\frac{2}{2+\sqrt{2}}.
\end{align}
\end{corollary}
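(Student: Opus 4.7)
The plan is to deduce everything from the closed formula
$\mathrm{QEC}(K_n\star K_m)=-\bigl(1+\sqrt{(1-1/m)(1-1/n)}\bigr)^{-1}$
of Proposition \ref{04prop:QEC(K_n*K_m)} specialized to $m=2$, together with the explicit value $\mathrm{QEC}(P_4)=-2/(2+\sqrt2)$ already recorded in Subsection 4.1.

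First I would substitute $m=2$ to get
\[
\mathrm{QEC}(K_n\star K_2)
=-\frac{1}{1+\sqrt{\dfrac{1}{2}\Bigl(1-\dfrac{1}{n}\Bigr)}}\,,
\qquad n\ge2,
\]
and verify the endpoint $n=2$: the inner square root equals $\sqrt{1/4}=1/2$, whence $\mathrm{QEC}(K_2\star K_2)=-2/3=\mathrm{QEC}(P_3)$, matching the list of values in Subsection 4.1. This identifies the first term of the sequence.

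Next I would prove the strict monotonicity. The map $n\mapsto 1-1/n$ is strictly increasing on $\{n\ge2\}$, hence so is $n\mapsto\sqrt{\tfrac12(1-1/n)}$, and therefore $n\mapsto -\bigl(1+\sqrt{\tfrac12(1-1/n)}\bigr)^{-1}$ is strictly increasing as well (it takes negative values that move toward $0$). This yields
\[
\mathrm{QEC}(K_2\star K_2)<\mathrm{QEC}(K_3\star K_2)<\dotsb<\mathrm{QEC}(K_n\star K_2)<\dotsb
\]

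Finally I would compute the limit. Letting $n\to\infty$, the expression $1-1/n\to 1$, so
\[
\lim_{n\to\infty}\mathrm{QEC}(K_n\star K_2)
=-\frac{1}{1+\sqrt{1/2}}
=-\frac{\sqrt2}{\sqrt2+1}
=-(2-\sqrt2),
\]
after rationalizing by $\sqrt2-1$. Since $-\dfrac{2}{2+\sqrt2}=-\dfrac{2(2-\sqrt2)}{(2+\sqrt2)(2-\sqrt2)}=-(2-\sqrt2)$, this limit equals $\mathrm{QEC}(P_4)$, completing the claim. No step here is a real obstacle; the only thing to be careful about is the rationalization showing that the two forms $-1/(1+1/\sqrt2)$ and $-2/(2+\sqrt2)$ coincide, which is routine but worth spelling out.
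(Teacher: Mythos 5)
Your proposal is correct and follows exactly the paper's route: the paper's proof simply specializes Proposition \ref{04prop:QEC(K_n*K_m)} to $m=2$, obtaining $\mathrm{QEC}(K_n\star K_2)=-2\big/\bigl(2+\sqrt{2(1-1/n)}\bigr)$ (the same expression as yours after clearing the factor of $2$), and declares the monotonicity and limit immediate. You have merely written out those routine verifications explicitly, which is fine.
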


\begin{proof}
By Proposition \ref{04prop:QEC(K_n*K_m)} we have
\[
\mathrm{QEC}(K_n\star K_2)
=-\frac{2}{2+\sqrt{2\Big(1-\dfrac{1}{n}\Big)}}\,,
\quad n\ge1,
\]
from which the assertion follows immediately.
\end{proof}

\begin{corollary}
Let $m\ge1$ and $n\ge1$ with $m+n\ge3$.
Then $\mathrm{QEC}(K_n\star K_m)<\mathrm{QEC}(P_4)$ 
if and only if one of the following conditions is 
satisfied:
\begin{enumerate}
\setlength{\itemsep}{0pt}
\item[\upshape (i)] $m=2$ and $n\ge1$;
\item[\upshape (ii)] $m\ge1$ and $n=2$;
\item[\upshape (iii)] $m=n=3$.
\end{enumerate}
\end{corollary}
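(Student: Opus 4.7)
The plan is to reduce the claimed equivalence to an elementary arithmetic condition on $(m,n)$ and then verify that condition by direct case analysis. Using Proposition~\ref{04prop:QEC(K_n*K_m)}, which gives the closed form $\mathrm{QEC}(K_n\star K_m)=-\bigl(1+\sqrt{(1-1/m)(1-1/n)}\bigr)^{-1}$, together with $\mathrm{QEC}(P_4)=-2/(2+\sqrt{2})=-(2-\sqrt{2})$ recorded at the start of Section~\ref{sec:Classification}, the inequality $\mathrm{QEC}(K_n\star K_m)<\mathrm{QEC}(P_4)$ becomes, after multiplying through by $-1$,
$\tfrac{1}{1+\sqrt{(1-1/m)(1-1/n)}}>\tfrac{2}{2+\sqrt{2}}$. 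Taking reciprocals and simplifying the right-hand side (using $2/(2+\sqrt{2})=1-\sqrt{2}/2$) yields $1+\sqrt{(1-1/m)(1-1/n)}<1+\sqrt{2}/2$, and a subtract-and-square step reduces the entire problem to the clean condition
$\bigl(1-\tfrac{1}{m}\bigr)\bigl(1-\tfrac{1}{n}\bigr)<\tfrac{1}{2}.$

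With that reduction in hand, the iff is a short case check backed by monotonicity. When $m=2$, the left-hand side equals $(1/2)(1-1/n)<1/2$ for every $n\ge 1$, which establishes case~(i); case~(ii) follows by the symmetry $(m,n)\leftrightarrow(n,m)$. For the remaining range $m,n\ge 3$, the expression $(1-1/m)(1-1/n)$ is monotone nondecreasing in each argument, so the strict inequality can only hold on a lower-left corner of the grid. Direct evaluation gives $4/9<1/2$ at $(m,n)=(3,3)$, exactly $1/2$ at $(3,4)$ and $(4,3)$ (failing the strict inequality), and $9/16>1/2$ at $(4,4)$; monotonicity then eliminates every remaining pair with $\min(m,n)\ge 3$. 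Thus $(3,3)$ is the only additional pair, giving case~(iii).

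I do not foresee any real obstacle. The only points that genuinely warrant care are the sign flip and the squaring step in the initial reduction: both are legitimate, since the quantities being compared are negative real numbers while the quantities being squared are nonnegative, but it is prudent to verify each step explicitly so as not to reverse an inequality by accident. Once the reduction is nailed down, the rest is pure arithmetic monotonicity.
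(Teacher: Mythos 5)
Your reduction is the same route the paper takes: it substitutes the closed form from Proposition \ref{04prop:QEC(K_n*K_m)} and the value $\mathrm{QEC}(P_4)=-2/(2+\sqrt2)$, and the paper likewise leaves the resulting inequality to ``simple algebra.'' Your algebra is essentially right, and the endpoint $(1-1/m)(1-1/n)<1/2$ is the correct clean form. One small slip: the parenthetical identity $2/(2+\sqrt2)=1-\sqrt2/2$ is false (in fact $2/(2+\sqrt2)=2-\sqrt2$); what you actually need, and what your displayed inequality correctly uses, is that the reciprocal is $(2+\sqrt2)/2=1+\sqrt2/2$. That is harmless.

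The genuine gap is in the case analysis. After disposing of $m=2$ and $n=2$ you jump to ``the remaining range $m,n\ge3$,'' but the hypothesis is only $m,n\ge1$ with $m+n\ge3$, so you have silently dropped the pairs with $\min(m,n)=1$. These are not vacuous: for $m=1$ the product $(1-1/m)(1-1/n)$ equals $0<1/2$, so the inequality $\mathrm{QEC}(K_n\star K_1)<\mathrm{QEC}(P_4)$ \emph{does} hold (indeed $K_n\star K_1=K_n$ has QE constant $-1$), yet a pair such as $(m,n)=(1,5)$ satisfies none of (i)--(iii). So the ``only if'' direction cannot be closed as you have left it; you must either treat the degenerate cases $m=1$ and $n=1$ explicitly and observe that the stated list, read literally, omits them (i.e.\ the solution set is really $\{\min(m,n)\le2\}\cup\{(3,3)\}$ under the stated hypotheses), or restrict to $m,n\ge2$, which is the regime actually used in Corollary \ref{04cor:halfly determining G_3} and in the proof of Theorem \ref{05thm: G_3}. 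The rest of your argument --- monotonicity of $(1-1/m)(1-1/n)$ in each variable, the evaluations $4/9<1/2$ at $(3,3)$ and exactly $1/2$ at $(3,4)$ --- is correct and settles the range $m,n\ge3$.
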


\begin{proof}
The inequality $\mathrm{QEC}(K_n\star K_m)<\mathrm{QEC}(P_4)$ 
is equivalent to 
\[
-\frac{1}
 {1+\sqrt{\Big(1-\dfrac{1}{m}\Big)\Big(1-\dfrac{1}{n}\Big)}}
<-\frac{2}{2+\sqrt{2}}\,,
\]
of which integer solutions are obtained easily 
by simple algebra.
\end{proof}

\begin{corollary}\label{04cor:halfly determining G_3} 
Let $m\ge1$ and $n\ge1$ with $m+n\ge3$.
Then 
\begin{equation}\label{4eqn:between P3 and P4}
\mathrm{QEC}(P_3)\le \mathrm{QEC}(K_n\star K_m)<\mathrm{QEC}(P_4)
\end{equation}
holds if and only if one of the following conditions is 
satisfied:
\begin{enumerate}
\setlength{\itemsep}{0pt}
\item[\upshape (i)] $m=2$ and $n\ge2$;
\item[\upshape (ii)] $m\ge2$ and $n=2$;
\item[\upshape (iii)] $m=n=3$.
\end{enumerate}
The equality in \eqref{4eqn:between P3 and P4} 
occurs only when $m=n=2$.
\end{corollary}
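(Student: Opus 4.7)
The plan is to combine the previous corollary (which already identifies exactly when $\mathrm{QEC}(K_n\star K_m)<\mathrm{QEC}(P_4)$) with a check of the lower bound $\mathrm{QEC}(P_3)\le \mathrm{QEC}(K_n\star K_m)$ on that list. The upper bound has already been resolved, so only the left-hand inequality is the new content.

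First I would observe, straight from Proposition \ref{04prop:QEC(K_n*K_m)}, that the function
\[
(m,n)\longmapsto \mathrm{QEC}(K_n\star K_m)
=-\frac{1}{1+\sqrt{(1-1/m)(1-1/n)}}
\]
is strictly increasing in each of $m$ and $n$ separately, since $(1-1/m)(1-1/n)$ is. Combined with the value $\mathrm{QEC}(K_2\star K_2)=-2/3=\mathrm{QEC}(P_3)$ computed in the preceding corollary, this immediately yields $\mathrm{QEC}(P_3)\le \mathrm{QEC}(K_n\star K_m)$ as soon as both $m\ge2$ and $n\ge2$, with equality exactly at $m=n=2$.

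Next, I would run through the three cases listed in the previous corollary and intersect each with the lower bound. In case (i) with $m=2$, the candidate $n=1$ gives $K_1\star K_2=K_2=P_2$ whose QE constant is $-1$, strictly below $\mathrm{QEC}(P_3)=-2/3$, so $n=1$ must be dropped; by the monotonicity above all $n\ge2$ survive, and equality holds only at $n=2$. Case (ii) is symmetric. Case (iii) is the isolated point $m=n=3$, which was kept in the previous corollary precisely because its value lies below $\mathrm{QEC}(P_4)$; since $3\ge 2$ on both sides, the monotonicity argument already guarantees $\mathrm{QEC}(K_3\star K_3)>\mathrm{QEC}(P_3)$ (a quick arithmetic check gives $-3/5$, sitting between $-2/3$ and $-(2-\sqrt2)$).

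Conversely, any pair $(m,n)$ making the displayed double inequality hold must lie in the list of the previous corollary (for the upper bound) and must have $(m,n)\ne(1,2),(2,1)$ (for the lower bound, since those two pairs give $K_2$). Everything surviving this intersection is exactly (i), (ii), (iii) as stated, and the equality clause is the single case $m=n=2$. No step is really the ``main obstacle'' here; the entire argument is just the monotonicity of the explicit formula plus bookkeeping against the preceding corollary, so the write-up is very short.

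\begin{proof}
By the preceding corollary, the upper bound $\mathrm{QEC}(K_n\star K_m)<\mathrm{QEC}(P_4)$ holds precisely for the pairs $(m,n)$ with $m=2$ and $n\ge1$, or $m\ge1$ and $n=2$, or $m=n=3$. By Proposition \ref{04prop:QEC(K_n*K_m)} the quantity $\mathrm{QEC}(K_n\star K_m)$ is strictly increasing in each of $m$ and $n$, and $\mathrm{QEC}(K_2\star K_2)=-2/3=\mathrm{QEC}(P_3)$. Hence $\mathrm{QEC}(P_3)\le\mathrm{QEC}(K_n\star K_m)$ whenever $m\ge2$ and $n\ge2$, with equality iff $m=n=2$. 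The only pairs on the upper-bound list failing $m,n\ge2$ are $(m,n)=(2,1)$ and $(1,2)$, both of which give $K_2$ with QE constant $-1<\mathrm{QEC}(P_3)$. Removing these from the list yields precisely conditions (i)--(iii), and the equality clause follows from the equality case above.
\end{proof}
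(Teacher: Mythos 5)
Your proof is correct and follows the same route the paper intends: the paper states this corollary without a separate proof, treating it as immediate from the preceding corollary (which settles the upper bound) together with the explicit formula of Proposition \ref{04prop:QEC(K_n*K_m)}, exactly as you do. The only nitpick is that $(1-1/m)(1-1/n)$ is not strictly increasing in $m$ when $n=1$ (it is identically $0$ there), but you only invoke monotonicity on the region $m,n\ge2$, where it holds, so the argument is unaffected.
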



\subsection{Determining the class 
$\mathrm{QEC}(P_3)\le \mathrm{QEC}(G)<\mathrm{QEC}(P_4)$}

This subsection is devoted to the proof of the following result.

\begin{theorem}\label{05thm: G_3}
A finite connected graph $G$ fulfills the inequality 
\begin{equation}\label{04eqn:condition G_3}
\mathrm{QEC}(P_3)\le \mathrm{QEC}(G)<\mathrm{QEC}(P_4)
\end{equation}
if and only if $G$ is a star product $K_n \star K_2$ with 
$n\ge2$ or $K_3\star K_3$.
Moreover,
\begin{align*}
\mathrm{QEC}(K_n\star K_2)
&=-\frac{2}{2+\sqrt{2\Big(1-\dfrac1n\Big)}}\,, \\
\mathrm{QEC}(K_3\star K_3)
&=-\frac{3}{5}\,.
\end{align*}
In particular, $\mathrm{QEC}(G)=\mathrm{QEC}(P_3)$
if and only if $G=P_3=K_2\star K_2$.
\end{theorem}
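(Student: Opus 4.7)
The plan is to combine the structural constraints obtained in Corollary~\ref{04cor:forbidden subgraphs} with the explicit values tabulated in Corollary~\ref{04cor:halfly determining G_3}. Since $\mathrm{QEC}(P_4)=-(2-\sqrt{2})<-1/2$, the hypothesis $\mathrm{QEC}(G)<\mathrm{QEC}(P_4)$ together with Corollary~\ref{04cor:forbidden subgraphs} forces $G$ to be diamond-, claw-, $C_4$-, and $C_5$-free; Proposition~\ref{03prop:diam} gives $\mathrm{diam}(G)\le 2$; and the lower bound $\mathrm{QEC}(G)\ge\mathrm{QEC}(P_3)=-2/3$ combined with $\mathrm{QEC}(K_n)=-1$ rules out complete graphs, so $\mathrm{diam}(G)=2$.

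The heart of the argument is a structural lemma: any connected, diameter-$2$ graph that is diamond-, claw-, $C_4$-, and $C_5$-free must be a star product $K_n\star K_m$ with $n,m\ge 2$. I would first extract three ingredients. Diamond-freeness makes each edge lie in a unique maximal clique (equivalently, any two maximal cliques share at most one vertex). Claw-freeness then restricts each vertex to at most two maximal cliques, since picking one representative from each of three such cliques would yield a claw in its neighborhood. Diameter $2$ together with $C_4$-freeness implies that every pair of non-adjacent vertices has exactly one common neighbor: a second common neighbor would be either adjacent to the first (producing a diamond) or non-adjacent (producing an induced $C_4$). Fixing a vertex $w$ lying in two maximal cliques $A$ and $B$, which exists because $G$ is not complete, I would show $V(G)=A\cup B$. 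Assume for contradiction $x\notin A\cup B$; then the unique common neighbor $y$ of $w$ and $x$ lies, without loss of generality, in $A\setminus\{w\}$, and since $x\notin A$, the vertex $y$ must also belong to a third maximal clique $C\ni x$. Taking $v\in B\setminus\{w\}$, the proof then analyses $(v,x)$ according to whether $v\sim x$ and whether $v$, $x$ have further maximal cliques $D$, $E$. Tracking either the common maximal clique of $v,x$ (if adjacent) or the unique common neighbor $z$ of $v$ and $x$ (if not), the shared vertices among the cliques involved assemble into either a triangle in the clique-intersection graph (giving a triangle in $G$ whose three edges belong to three distinct maximal cliques, contradicting uniqueness), an induced $C_4$ on four of these shared vertices, or an induced $C_5$ on $\{w,y,x,z,v\}$. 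All such configurations are forbidden, so $V(G)=A\cup B$, and hence $G=K_n\star K_m$ with $n=|A|\ge 2$ and $m=|B|\ge 2$.

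Once this reduction is in hand, Corollary~\ref{04cor:halfly determining G_3} enumerates exactly the admissible pairs $(n,m)$, giving $K_n\star K_2$ for $n\ge 2$ or $K_3\star K_3$, and the closed-form QE constants follow by substitution into Proposition~\ref{04prop:QEC(K_n*K_m)} (in particular $\mathrm{QEC}(K_3\star K_3)=-3/5$). The same corollary pins down the equality $\mathrm{QEC}(G)=\mathrm{QEC}(P_3)$ to the unique case $n=m=2$, giving $G=K_2\star K_2=P_3$.

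The principal obstacle is the structural step: one must systematically rule out a third maximal clique by exhibiting a forbidden induced subgraph from $\{K_4\setminus\{e\},\,C_4,\,C_5\}$ in each of the several subcases for the possible second maximal cliques of $v$ and $x$. The remainder of the proof is then a direct substitution into Corollary~\ref{04cor:halfly determining G_3}.
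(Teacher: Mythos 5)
Your proposal is correct and runs on the same overall strategy as the paper: use $\mathrm{QEC}(P_4)=-(2-\sqrt2)<-1/2$ to invoke Corollary~\ref{04cor:forbidden subgraphs} and Proposition~\ref{03prop:diam}, force $G$ to be a star product of two complete graphs, and then read off the admissible pairs and the explicit constants from Corollary~\ref{04cor:halfly determining G_3} and Proposition~\ref{04prop:QEC(K_n*K_m)}. The difference is in how the structural step is organized. The paper anchors on a \emph{largest} clique $W$: Lemma~\ref{04lem:out of max clique} (diamond-freeness) shows each outside vertex meets $W$ in exactly one vertex, Lemma~\ref{05lem:unique a^prime} ($C_4$, $C_5$, diameter $2$) shows all outside vertices attach at the \emph{same} vertex of $W$, diameter $2$ kills $U_2$, and claw-freeness makes the outside vertices a clique. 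You instead anchor on a vertex $w$ lying in two maximal cliques $A,B$ and prove $V=A\cup B$, after first establishing that diamond-freeness puts each edge in a unique maximal clique, that claw-freeness then caps each vertex at two maximal cliques, and that non-adjacent pairs have a unique common neighbour. Both routes burn the same fuel (the four forbidden subgraphs plus diameter $2$) and are of comparable length; yours isolates a reusable clique-structure statement for diamond- and claw-free graphs, while the paper's largest-clique argument avoids the multi-case chase around $\{w,y,x,z,v\}$. Two points in your sketch need to be written out to make it a proof. First, the case analysis for $x\notin A\cup B$: it does close (with $v\in B\setminus\{w\}$, if $v\sim y$ one gets a diamond on $\{w,y,v,x\}$ since $v$ is forced into $C$; if $v\sim x$ an induced $C_4$ on $w,y,x,v$; otherwise the unique common neighbour $z$ of $v,x$ yields a diamond or an induced $C_4$ whenever $z\sim w$ or $z\sim y$, and an induced $C_5$ on $w,y,x,z,v$ otherwise). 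Second, the passage from $V=A\cup B$ to $G=K_{|A|}\star K_{|B|}$ still requires that there be no edge between $A\setminus\{w\}$ and $B\setminus\{w\}$; such an edge again produces a diamond (or contradicts maximality of $A$ and $B$ when both have size $2$), so this is easily repaired, but it does not follow from $V=A\cup B$ alone.
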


\begin{lemma}\label{04lem:01}
If a connected graph $G=(V,E)$ satisfies
\eqref{04eqn:condition G_3}, we have
$|V|\ge3$ and $\mathrm{diam}(G)=2$.
\end{lemma}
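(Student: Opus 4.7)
The plan is to combine the two corollaries of Proposition~\ref{01prop:isometrically embedded subgraphs} (namely Corollary~\ref{02cor:QEC and diameter}) with the fact that $\mathrm{QEC}(K_n)=-1$ for every complete graph. Both bounds on $\mathrm{diam}(G)$ should follow almost immediately.

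First, I would dispose of the cardinality claim. If $|V|=2$, then $G=K_2=P_2$, so $\mathrm{QEC}(G)=-1$, which is strictly less than $\mathrm{QEC}(P_3)=-2/3$, contradicting the lower inequality in \eqref{04eqn:condition G_3}. Hence $|V|\ge3$.

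Next I would get the upper bound on the diameter directly from Corollary~\ref{02cor:QEC and diameter}(2): applying it with $d=3$ (so that $P_{d+1}=P_4$), the hypothesis $\mathrm{QEC}(G)<\mathrm{QEC}(P_4)$ forces $\mathrm{diam}(G)\le 2$.

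For the matching lower bound, I would argue by contradiction. If $\mathrm{diam}(G)\le1$, then $G$ is a complete graph, and since $|V|\ge3$ we have $G=K_n$ with $n\ge3$. But then $\mathrm{QEC}(G)=\mathrm{QEC}(K_n)=-1$ by \eqref{02eqn:QEC(K_n)}, which again violates $\mathrm{QEC}(P_3)\le \mathrm{QEC}(G)$. Therefore $\mathrm{diam}(G)\ge2$, and combined with the previous step we conclude $\mathrm{diam}(G)=2$. There is no real obstacle here; the lemma is essentially an unpacking of the bracket condition using results already established in Sections~2 and~3, and it will serve as the geometric setup (induced subgraph $=$ isometric subgraph, thanks to Lemma~\ref{03lem:isometric embedding}(2)) for the classification work to follow in Theorem~\ref{05thm: G_3}.
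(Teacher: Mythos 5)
Your proof is correct and follows essentially the same route as the paper: Corollary~\ref{02cor:QEC and diameter}(2) yields $\mathrm{diam}(G)\le 2$, and the complete-graph case is excluded because $\mathrm{QEC}(K_n)=-1<\mathrm{QEC}(P_3)$. The only cosmetic difference is that you treat $|V|=2$ separately, which is already subsumed in the complete-graph case.
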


\begin{proof}
It follows from Corollary \ref{02cor:QEC and diameter} that
$\mathrm{diam}(G)\le2$.
If $\mathrm{diam}(G)=1$, then $G$ is a complete graph 
and $\mathrm{QEC}(G)=-1$, which
does not satisfy \eqref{04eqn:condition G_3}.
Hence, necessarily $\mathrm{diam}(G)=2$ and $|V|\ge3$.
\end{proof}

In general, a \textit{clique} of $G$ is an induced subgraph of
$G$ which is isomorphic to a complete graph.
A clique $K=(W,F)$ is called \textit{maximal} if
there is no clique containing $K$ properly.
A maximal clique $K=(W,F)$ is called \textit{largest} 
or \textit{maximum} if
there is no clique on $|W|+1$ vertices.
Clearly, any graph contains a largest clique.

\begin{lemma}\label{04lem:02}
Let $G=(V,E)$ be a connected graph 
satisfying \eqref{04eqn:condition G_3}.
If $K=(W,F)$ is a maximal clique of $G$,
we have $W\neq V$ and $|W|\ge2$.
\end{lemma}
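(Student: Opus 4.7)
The plan is to prove the two claims separately, both of which fall out almost immediately from Lemma \ref{04lem:01}, which already guarantees $|V|\ge 3$ and $\mathrm{diam}(G)=2$ for any $G$ satisfying \eqref{04eqn:condition G_3}.

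For the first claim $W\neq V$, I would argue by contradiction. If $W=V$, then the induced subgraph on $W$ equals $G$, so $G$ is itself a complete graph $K_{|V|}$. This forces $\mathrm{diam}(G)=1$, contradicting Lemma \ref{04lem:01}. Alternatively, one may invoke the explicit value $\mathrm{QEC}(K_n)=-1$ from \eqref{02eqn:QEC(K_n)} together with $\mathrm{QEC}(P_3)=-2/3$, to see that $G$ cannot be complete without violating the lower bound in \eqref{04eqn:condition G_3}.

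For the second claim $|W|\ge 2$, I would again argue by contradiction. Suppose $W=\{v\}$ is a singleton, so that $K\cong K_1$. Since $|V|\ge 3$ and $G$ is connected, the vertex $v$ has at least one neighbor $u\in V$. Then $\{v,u\}$ induces a $K_2$ in $G$, which properly contains the clique $K$ and is itself a clique of $G$. This contradicts the maximality of $K$, so $|W|\ge 2$.

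There is no genuine obstacle here; both parts are routine verifications. The purpose of isolating these properties as a lemma is bookkeeping: subsequent arguments in this subsection will pick a largest clique $K$ of $G$ and build structural information by looking at vertices of $V\setminus W$, which requires knowing that such vertices exist (the first claim) and that $K$ carries nontrivial adjacency information (the second claim).
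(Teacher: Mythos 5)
Your proof is correct and follows essentially the same route as the paper: the paper also deduces $W\neq V$ from the fact that $G$ is not complete (via Lemma \ref{04lem:01}) and $|W|\ge2$ from $|V|\ge3$ together with connectedness. Your version merely spells out the maximality argument that the paper leaves implicit.
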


\begin{proof}
Since $G$ is not a complete graph by Lemma \ref{04lem:01},
we have $W\neq V$.
That $|W|\ge2$ follows from $|V|\ge3$.
\end{proof}

\begin{lemma}\label{04lem:out of max clique}
Let $G=(V,E)$ be a connected graph with $|V|\ge2$ and
$\mathrm{QEC}(G)<-1/2$,
and $K=(W,F)$ a maximal clique.
Then for any pair $a\in V\backslash W$ and
$a^\prime \in W$ with $a\sim a^\prime$ we have
$\{x\in W\,;\, x\sim a\}=\{a^\prime\}$.
\end{lemma}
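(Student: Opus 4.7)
The plan is to argue by contradiction, using the forbidden subgraph list from Corollary \ref{04cor:forbidden subgraphs}, which is the only tool available for translating the bound $\mathrm{QEC}(G) < -1/2$ into combinatorial information. In particular I would target the diamond $K_4 \setminus \{e\}$ as the obstruction.

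First I would assume, toward a contradiction, that there is some $a'' \in W$ with $a'' \neq a'$ and $a'' \sim a$. Since $a', a''$ both lie in the clique $K$, we have $a' \sim a''$, so $\{a, a', a''\}$ already spans a triangle in the induced subgraph on $W \cup \{a\}$.

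Next I would exploit the maximality of $K$. If every vertex of $W$ were adjacent to $a$, then $W \cup \{a\}$ would span a larger clique containing $K$ properly, contradicting maximality. Hence there exists some $b \in W$ with $b \not\sim a$; note $b \neq a', a''$ since both of these are adjacent to $a$. Because $b, a', a''$ all lie in the clique $K$, we have $b \sim a'$ and $b \sim a''$.

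The induced subgraph on $\{a, a', a'', b\}$ then has exactly the edges $a a'$, $a a''$, $a' a''$, $a' b$, $a'' b$, with the single non-edge $a b$ — that is, it is isomorphic to the diamond $K_4 \setminus \{e\}$. By Corollary \ref{04cor:forbidden subgraphs}, this is incompatible with $\mathrm{QEC}(G) < -1/2$, giving the required contradiction. The argument is short, and the only subtle point is remembering that the existence of the non-neighbor $b \in W$ of $a$ is forced by the \emph{maximality} hypothesis on $K$ — without that, $a$ could be adjacent to all of $W$ and no diamond would appear.
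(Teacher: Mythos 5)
Your proof is correct and is essentially the same as the paper's: both argue by contradiction, use the maximality of $K$ to produce a vertex of $W$ not adjacent to $a$, and exhibit an induced diamond $K_4\setminus\{e\}$ on the four vertices, which is forbidden when $\mathrm{QEC}(G)<-1/2$. The only cosmetic difference is that you cite Corollary \ref{04cor:forbidden subgraphs} while the paper invokes Proposition \ref{04prop:sufficient condition for QEC>=-1/2} directly.
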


\begin{proof}
(Note that the assertion is trivial if $W=V$.)
Given a pair $a\in V\backslash W$ and
$a^\prime \in W$ with $a\sim a^\prime$,
we set $s=|\{x\in W\,;\, x\sim a\}|$.
Obviously, $1\le s<|W|$.
We will show by contradiction that $s=1$.
Suppose that $s\ge2$.
Then there exist three distinct vertices $x_1, x_2, y\in W$ such that
$a\sim x_1$, $a\sim x_2$ and $a\not \sim y$.
Note that the induced subgraph spanned by $\{a,x_1,x_2,y\}$
is isomorphic to a diamond $K_4\backslash \{e\}$.
It then follows immediately from
Proposition \ref{04prop:sufficient condition for QEC>=-1/2}
that $\mathrm{QEC}(G)\ge -1/2$,
which is in contradiction to the assumption
$\mathrm{QEC}(G)<-1/2$.
\end{proof}

\begin{lemma}\label{05lem:unique a^prime}
Let $G=(V,E)$ be a connected graph 
satisfying \eqref{04eqn:condition G_3}
and $K=(W,F)$ a maximal clique of $G$.
For $a,b\in V\backslash W$ and $a^\prime,b^\prime\in W$,
if $a\sim a^\prime$ and $b\sim b^\prime$, then $a^\prime=b^\prime$.
\end{lemma}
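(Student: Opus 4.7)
The plan is to argue by contradiction: assume $a'\neq b'$ and exhibit one of the forbidden induced subgraphs listed in Corollary \ref{04cor:forbidden subgraphs} --- specifically a $C_4$ or a $C_5$. The ingredients I would lean on throughout are: (i) $\mathrm{QEC}(G)<\mathrm{QEC}(P_4)<-1/2$, which by Corollary \ref{04cor:forbidden subgraphs} makes $G$ both $C_4$-free and $C_5$-free; (ii) $\mathrm{diam}(G)\le 2$ by Proposition \ref{03prop:diam}; and (iii) the structural fact from Lemma \ref{04lem:out of max clique} that every vertex in $V\setminus W$ has exactly one neighbor in the maximal clique $W$.

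The first move is to rule out the possibility $a=b$: this would put two distinct neighbors $a',b'\in W$ on a single vertex $a\in V\setminus W$, contradicting (iii). So $a\neq b$, and by (ii) either $a\sim b$ or $d(a,b)=2$. In the case $a\sim b$, I would consider the four vertices $\{a,a',b',b\}$, which carry the cycle edges $a\sim a'$, $a'\sim b'$ (since $a',b'$ lie in the clique $W$), $b'\sim b$, and $b\sim a$, while the two potential chords $a\sim b'$ and $a'\sim b$ are both forbidden by (iii). This produces an induced $C_4$, contradicting (i).

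The bulk of the work is the case $d(a,b)=2$. I would pick a common neighbor $c$ of $a$ and $b$, and first verify $c\notin W$: if $c\in W$, then (iii) applied to the edges $a\sim c$ and $b\sim c$ would force $c=a'$ and $c=b'$, collapsing $a'=b'$. With $c\in V\setminus W$, (iii) says $c$ has at most one neighbor in $W$, splitting the analysis into three subcases. If $c\sim a'$ and $c\not\sim b'$, then $\{c,a',b',b\}$ induces the 4-cycle $c-a'-b'-b-c$, with the two potential chords $c\sim b'$ and $a'\sim b$ excluded by the subcase assumption and by (iii), respectively. The symmetric subcase $c\sim b'$, $c\not\sim a'$ yields an induced $C_4$ on $\{a,a',b',c\}$. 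In the remaining subcase where $c$ has no neighbor in $\{a',b'\}$, the five vertices $\{a,a',b',b,c\}$ induce the 5-cycle $a-a'-b'-b-c-a$; all five potential diagonals are killed in turn by (iii) (for $a\sim b'$ and $a'\sim b$), the standing case assumption (for $a\sim b$), and the subcase (for $c\sim a'$ and $c\sim b'$). Each subcase contradicts (i).

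The main obstacle, as far as I can see, is really just the bookkeeping in the last case: confirming for each would-be 4- or 5-cycle that every non-cycle edge among the selected vertices is excluded, so that the cycle is genuinely induced. Once the strategy of conditioning on a common neighbor $c$ of $a$ and $b$ is in place, no further structural input beyond Lemma \ref{04lem:out of max clique} and Corollary \ref{04cor:forbidden subgraphs} is needed.
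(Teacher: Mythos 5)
Your argument is correct and follows essentially the same route as the paper's proof: reduce to $a\neq b$ via Lemma \ref{04lem:out of max clique}, split on $d(a,b)\in\{1,2\}$ using $\mathrm{diam}(G)=2$, and in the distance-$2$ case condition on the adjacencies of a common neighbor $c\notin W$ to exhibit an induced $C_4$ or $C_5$ forbidden by Corollary \ref{04cor:forbidden subgraphs}. The only cosmetic difference is that you fold the paper's fourth subcase ($c\sim a'$ and $c\sim b'$) into the observation that $c$ has at most one neighbor in $W$; note also that Lemma \ref{04lem:out of max clique} only guarantees \emph{at most} one neighbor in $W$ (not exactly one), but you use it correctly where it matters.
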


\begin{proof}
If $a=b$ the assertion follows immediately from
Lemma \ref{04lem:out of max clique}.
We consider the case of $a\neq b$.
To prove the assertion by contradiction,
we assume that $a^\prime\neq b^\prime$.
Since $d(a,b)\le \mathrm{diam}(G)=2$,
we have two cases: $d(a,b)=1$ or $d(a,b)=2$.

Suppose first that $d(a,b)=1$, that is, $a\sim b$.
Then the induced subgraph spanned by $\{a,a^\prime,b^\prime, b\}$
is isomorphic to $C_4$,
which is a forbidded subgraph by
Corollary \ref{04cor:forbidden subgraphs}.
Hence $d(a,b)=1$ does not happen.

Suppose next that $d(a,b)=2$.
Then there exists $c\in V$ such that $a\sim c \sim b$.
Since $a\not\sim b^\prime$ and $b\not\sim a^\prime$ by
Lemma \ref{04lem:out of max clique},
we have $c\neq a^\prime,b^\prime$ and $c\not\in W$.
There are four cases:

(i) $c\not\sim a^\prime$ and $c\not\sim b^\prime$.
The induced subgraph spanned by $\{a,a^\prime,b^\prime, b,c\}$
is isomorphic to $C_5$,
which is a forbidded subgraph by
Corollary \ref{04cor:forbidden subgraphs}.

(ii) $c\not\sim a^\prime$ and $c\sim b^\prime$.
The induced subgraph spanned by $\{a,a^\prime,b^\prime,c\}$
is isomorphic to $C_4$,
which is a forbidded subgraph by
Corollary \ref{04cor:forbidden subgraphs}.

(iii) $c\sim a^\prime$ and $c\not\sim b^\prime$.
This case is similar to (ii).

(iv) $c\sim a^\prime$ and $c\sim b^\prime$.
This does not happen by virtue of
Lemma \ref{04lem:out of max clique}.

\par\noindent
In any case we come to contradiction and the proof is completed.
\end{proof}

\begin{proof}[Proof of Theorem \ref{05thm: G_3}]
Let $G=(V,E)$ be a connected graph 
satisfying \eqref{04eqn:condition G_3}
$K=(W,F)$ be a largest clique with $m=|W|$.
Note that $V\neq W$ and $m\ge2$ by Lemma \ref{04lem:02}.
Now divide $V\backslash W$ into two subsets:
\[
V\backslash W=U_1\cup U_2\,,
\]
where $U_1$ is the set of vertices $a\in 
V\backslash W$ which are directly connected 
to vertices in $W$,
and $U_2$ the rest,
see Figure \ref{05fig:WandU_1andU_2}.
Obviously, $U_1\neq\emptyset$.
Moreover, by Lemma \ref{05lem:unique a^prime} there 
exists a unique $a^\prime \in W$ such that
$a\sim a^\prime$ for all $a\in U_1$.

We first prove that $U_2=\emptyset$.
Suppose otherwise.
Take $x\in W$ with $x\neq a^\prime$ and $y\in U_2$.
Then we have $d(x,y)\ge3$, which is in
contradiction to $\mathrm{diam}(G)=2$.

We next prove that any pair of vertices $a,b\in U_1$,
$a\neq b$, are connected by an edge.
Suppose otherwise.
Take $x\in W$ with $x\neq a^\prime$ and 
consider the induced subgraph spanned by
$\{x,a^\prime,a,b\}$ is isomorphic to $K_{1,3}$,
which is a forbidded subgraph by
Corollary \ref{04cor:forbidden subgraphs}.

Consequently, The induced subgraph spanned by $U_1$
is a complete graph on $|U_1|\ge1$ vertices.
Hence $G$ is necessarily a star product of
two complete graphs: $G=K_m\star K_{|U_1|+1}$.
Then the assertion follows from
Corollary \ref{04cor:halfly determining G_3}.
\end{proof}

\begin{figure}[htb]
\begin{center}
\vspace*{10pt}
\includegraphics[height=100pt]{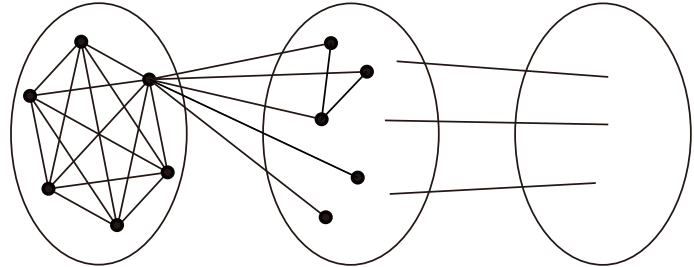}
\caption{$V=W\cup U_1\cup U_2$}
\label{05fig:WandU_1andU_2}
\end{center}
\end{figure}

\subsection{Bearded Complete Graphs $BK_{n,m}$}
\label{Bearded Complete Graphs}

We are also interested in characterization of a graph $G$
satisfying 
\[
\mathrm{QEC}(G)=\mathrm{QEC}(P_4)=-\frac{2}{2+\sqrt2}\,.
\]
Below we give a partial answer.

Let $1\le m\le n$.
Consider a graph on 
\[
V=\{1,2,\dots,n\}\cup\{n+1,\dots,n+m\}
\]
with edge set
\[
E=\{\{i,j\}\,;\, 1\le i<j\le n\}
 \cup\{\{i,n+i\}\,;\, 1\le i\le m\}.
\]
The induced subgraph spanned by
$\{1,2,\dots,n\}$ is the complete graph $K_n$.
We write $G=BK_{n,m}$ and call it  a \textit{bearded complete graph}.

\begin{figure}[htb]
\begin{center}
\vspace*{10pt}
\includegraphics[height=120pt]{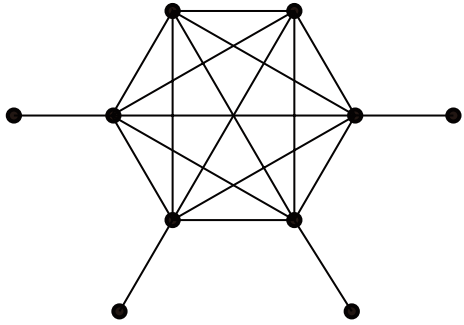}
\caption{$BK_{n,m}$ ($n=6, m=4$)}
\label{05fig:BK_nm}
\end{center}
\end{figure}

The distance matrix $D$ of $G=BK_{n,m}$ is written in the block matrices:
\begin{equation}\label{05eqn:D of BK_nm}
D=\left[
\renewcommand{\arraystretch}{2.4}
\begin{array}{@{\,}ccc |ccc|ccc@{\,}}
 & J-I & & & J   & & & 2J-I &\\ \hline
 & J   & & & J-I & & & 2J &\\ \hline
 & 2J-I & & & 2J & & & 3J-3I & \\
\end{array}
\right],
\end{equation}
where the diagonal matrices are of
$m\times m$, $(n-m)\times(n-m)$ and $m\times m$, in order.

For $m=n=1$ by definition $BK_{1,1}=K_2$.
Hence
\[
\mathrm{QEC}(BK_{1,1})=-1.
\]
For $m=1$ and $n\ge2$ we have $BK_{n,1}=K_n\star K_2=K_n\wedge K_{1,1}$.
It is already known that
\[
\mathrm{QEC}(BK_{n,1})=-\frac{2}{2+\sqrt{2\Big(1-\dfrac{1}{n}\Big)}}\,.
\]
The above formula is valid for $n=1$.

\begin{theorem}
Let $2\le m\le n$.
Then
\[
\mathrm{QEC}(BK_{n,m})=-\frac{2}{2+\sqrt2}=-(2-\sqrt2)
=\mathrm{QEC}(P_4).
\]
\end{theorem}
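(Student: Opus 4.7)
My plan is to prove the matching lower and upper bounds $\mathrm{QEC}(BK_{n,m}) = -(2-\sqrt{2})$ separately. The lower bound is immediate from Proposition~\ref{01prop:isometrically embedded subgraphs}: the four vertices $\{n{+}1, 1, 2, n{+}2\}$ span an induced copy of $P_4$ that is isometric in $BK_{n,m}$, because $d(n{+}1,1)=d(1,2)=d(2,n{+}2)=1$, $d(n{+}1,2)=d(1,n{+}2)=2$, and $d(n{+}1,n{+}2)=3$. Hence $\mathrm{QEC}(P_4) \le \mathrm{QEC}(BK_{n,m})$.

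For the upper bound I would exploit the automorphism group $H = S_m \times S_{n-m}$, where $S_m$ permutes the pairs $(i,n{+}i)$ simultaneously and $S_{n-m}$ permutes the non-bearded vertices $V_2 = \{m{+}1, \ldots, n\}$. Since $D$ commutes with $H$, the space $\mathbb{R}^{n+m}$ decomposes into $D$-stable $H$-isotypic components, and the constraint vector $\bm{1}$ lies in the trivial component; thus $\mathrm{QEC}(BK_{n,m})$ is the maximum of the largest eigenvalues of $D$ restricted to each non-trivial isotypic component and of $D$ restricted to the two-dimensional slice $\bm{1}^{\perp}$ of the trivial isotypic. On the $S_m$-standard component, functions of the form $(g, 0, h)$ with $g, h \in \mathbb{R}^m$ and $\sum g(i) = \sum h(i) = 0$ transform under $D$ as $D(g,0,0) = (-g, 0, -g)$ and $D(0,0,h) = (-h, 0, -3h)$, so on the two-dimensional multiplicity space $D$ acts by $\bigl(\begin{smallmatrix} -1 & -1 \\ -1 & -3 \end{smallmatrix}\bigr)$, whose eigenvalues are $-2 \pm \sqrt{2}$; the largest one is exactly $-(2-\sqrt{2}) = \mathrm{QEC}(P_4)$. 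On the $S_{n-m}$-standard component (present when $n > m$), functions supported on $V_2$ with zero sum satisfy $Df = -f$, contributing only the eigenvalue $-1$.

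It remains to show that the trivial isotypic contributes at most $-(2-\sqrt{2})$. Parameterising $f = a\bm{1}_{V_1} + b\bm{1}_{V_2} + c\bm{1}_{V_3}$ subject to $ma + (n-m)b + mc = 0$, and writing the three equations from $(D-\lambda)f = (\mu/2)\bm{1}$ indexed by $k \in V_1, V_2, V_3$, I would subtract the $V_1$--$V_2$ pair to obtain $c = (1+\lambda)(b-a)$ and the $V_1$--$V_3$ pair (together with the linear constraint) to obtain $\lambda a = (\lambda+2)c$. Re-inserting these into the constraint reduces to the clean quadratic $(n+m)\lambda^2 + 4n\lambda + 2n = 0$, whose larger root is $\lambda_{\mathrm{triv}} = (-2n + \sqrt{2n(n-m)})/(n+m)$. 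The inequality $\lambda_{\mathrm{triv}} \le -(2-\sqrt{2})$ is equivalent to $\sqrt{2n(n-m)} \le \sqrt{2}(n+m) - 2m$, whose right-hand side is non-negative because $n \ge m$ implies $\sqrt{2}(n+m) \ge 2\sqrt{2}\,m > 2m$; squaring and simplifying reduces to the manifestly true inequality $(6 - 4\sqrt{2})\,m(n+m) \ge 0$. Combining the three contributions yields $\mathrm{QEC}(BK_{n,m}) = \max\{-(2-\sqrt{2}), -1, \lambda_{\mathrm{triv}}\} = -(2-\sqrt{2})$. The main obstacle is the trivial-isotypic reduction to a quadratic in $\lambda$; once that is in hand, the final inequality is elementary.
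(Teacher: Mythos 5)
Your proof is correct, and it takes a more structured route than the paper's. The paper simply writes $\Tilde{f}=(f,g,h)$ according to the three blocks of $D$, forms the Lagrange function $F(f,g,h,\lambda,\mu)$ on the full $(n+m)$-dimensional problem, and asserts ``after simple calculus'' that the largest stationary $\lambda$ is $-(2-\sqrt2)$, exhibiting the maximizer $\langle\bm{1},f\rangle=0$, $g=0$, $h=-(\lambda+1)f$. Your decomposition into $H$-isotypic components makes that calculus transparent and verifiable: the paper's maximizer is exactly a vector in your $S_m$-standard component, where $D$ acts by $\bigl(\begin{smallmatrix}-1&-1\\-1&-3\end{smallmatrix}\bigr)$ on the multiplicity space with top eigenvalue $-2+\sqrt2$ (I checked the actions $D(g,0,0)=(-g,0,-g)$ and $D(0,0,h)=(-h,0,-3h)$ against \eqref{05eqn:D of BK_nm}; they are right). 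Your treatment of the trivial component --- the relations $c=(1+\lambda)(b-a)$ and $\lambda a=(\lambda+2)c$, the quadratic $(n+m)\lambda^2+4n\lambda+2n=0$, and the elementary verification that its larger root does not exceed $-(2-\sqrt2)$ --- supplies precisely the part the paper leaves to the reader, namely that no other stationary value beats $-2+\sqrt2$. Your separate lower bound via the isometrically embedded $P_4$ on $\{n+1,1,2,n+2\}$ (valid since $m\ge2$) is also a nice independent safeguard, although it is already implied by the fact that the eigenvalue $-2+\sqrt2$ is attained on the nonempty component $W_1$.

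Two small points to tidy up. First, when $n=m$ the block $V_2$ is empty, so the ``$V_1$--$V_2$ pair'' of stationary equations does not exist and your derivation of the quadratic divides by $n-m$; you should note that in this case the constraint forces $c=-a$, whence $\lambda a=(\lambda+2)c$ gives $\lambda=-1$, which agrees with the degenerate form of your quadratic and is still below $-(2-\sqrt2)$. Second, it is worth one sentence justifying that the constrained maximum splits as $\max\{\mu_0,\mu_1,\mu_2\}$ over the three components: since $D$ commutes with $H$ it preserves the mutually orthogonal isotypic subspaces, $\bm{1}$ lies in the trivial one, and a block-diagonal quadratic form on a sphere attains its maximum on the block with the largest constrained maximum.
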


\begin{proof}
According to the expression \eqref{05eqn:D of BK_nm} in block
diagonal form, the quadratic function
$\Phi=\langle\Tilde{f},D\Tilde{f}\rangle$ becomes
\begin{align*}
\Phi
&=\left\langle 
\begin{bmatrix} f \\ g \\ h \end{bmatrix},
\begin{bmatrix} J-I & J & 2J-I \\
                J & J-I & 2J \\ 
                2J-I & 2J & 3J-3I \end{bmatrix},
\begin{bmatrix} f \\ g \\ h \end{bmatrix}
\right\rangle \\
&=\langle\bm{1},f\rangle^2
  +\langle\bm{1},g\rangle^2
  +3\langle\bm{1},h\rangle^2
   -\langle f,f \rangle
   -\langle g,g \rangle
   -3\langle h,h \rangle \\
&\qquad   +2\langle \bm{1}, f \rangle \langle \bm{1}, g \rangle
   +4\langle \bm{1}, f \rangle \langle \bm{1}, h \rangle
   +4\langle \bm{1}, g \rangle \langle \bm{1}, h \rangle
   -2\langle f,h \rangle,
\end{align*}
where
\[
\Tilde{f}=
\begin{bmatrix}f \\ g \\ h \end{bmatrix},
\qquad
f\in\mathbb{R}^m,
\quad
g\in\mathbb{R}^{n-m},
\quad
h\in\mathbb{R}^m.
\]
We then consider the stationary points of
\begin{align*}
F(f,g,h,\lambda,\mu)
&=\Phi-\lambda(\langle f,f \rangle+\langle g,g \rangle+\langle h,h \rangle-1) \\
&\qquad
-\mu(\langle\bm{1},f\rangle+\langle\bm{1},g\rangle+\langle\bm{1},h\rangle).
\end{align*}
After simple calculus we see that
the largest $\lambda$ appearing in the stationary points 
of $F(f,g,h,\lambda,\mu)$ is given by 
\[
\lambda=-(2-\sqrt2),
\]
with
\[
\langle \bm{1}, f\rangle=0,
\quad
\langle f, f\rangle=\frac{2+\sqrt2}{4}\,,
\quad
g=0,
\quad
h=-(\lambda+1)f,
\quad
\mu=0.
\]
Indeed,
by virtue of the condition $m\ge2$,
we may choose $f\in\mathbb{R}^m$ satisfying
the first two conditions.
\end{proof}


\end{document}